\newtheorem{thm}{Theorem}[section]
\newtheorem{cor}[thm]{Corollary}
\newtheorem{lem}[thm]{Lemma}
\newtheorem{prop}[thm]{Proposition}
\newtheorem{defn}[thm]{Definition}
\newtheorem{ex}[thm]{Example}
\newtheorem*{thmA}{Theorem A}
\newtheorem*{thmB}{Theorem B}
\newtheorem{procedure}[thm]{}
\numberwithin{equation}{section}
\def\pn{\par\noindent}
\begin{document}

\vspace{1.3 cm}
\title{Graph homomorphisms and components of quotient graphs  }
\author{Daniela Bubboloni
}

\thanks{{\scriptsize
\hskip -0.4 true cm MSC(2010):  05C60, 05C70, 05C40.
\newline Keywords: Graph homomorphism, Quotient graph, Component, Power graphs.\\
}}
\maketitle


\begin{abstract}

We study how the number $c(X)$ of components of a graph $X$ can be expressed through the number and properties of the components of a quotient graph $X/\hspace{-1mm}\sim.$
We partially rely on classic qualifications of graph homomorphisms such as locally constrained homomorphisms and on the concept of equitable partition and orbit partition. We introduce the new definitions of pseudo-covering homomorphism and of component equitable partition, exhibiting interesting  inclusions among the various classes of considered homomorphisms. As a consequence, we find a procedure for computing $c(X)$ when the projection on the quotient   $X/\hspace{-1mm}\sim$ is pseudo-covering. That procedure becomes particularly easy to handle when the partition corresponding to $X/\hspace{-1mm}\sim$ is an orbit partition.

\end{abstract}

\vskip 0.2 true cm
\pagestyle{myheadings}
\markboth{\rightline {\scriptsize  Bubboloni}}
         {\leftline{\scriptsize Graphs homomorphisms }}

\bigskip
\bigskip
\section{\bf Introduction and main results}
\vskip 0.4 true cm

In algebra it is very common to study the properties of a set, endowed with some structure, by its quotients. Passing to a quotient reduces the complexity and allows one to focus only on certain properties, disregarding inessential details.
That idea has revealed  to be immensely fruitful especially in group theory, where manageable theorems describe the link between group homomorphisms and quotient groups.
In graph theory the notion of quotient graph appears less natural to deal with (\cite{hahn}, \cite{kna}).  That depends in large part on the fact that no notion of kernel is possible for a graph homomorphism. As a consequence it is often difficult to understand which properties are preserved in passing from a graph to a quotient graph. In this paper, developing a theory of graph homomorphisms, we show how to use information on the quotient graph components to get information on the graph components.
All the considered graphs are finite, undirected, simple and reflexive, that is, they have a loop on each vertex. Reflexivity  simplifies the study of graph homomorphisms without affecting connectivity.
Let $X$ and $Y$ be two graphs and let  $\varphi$ be a homomorphism from $X$ to $Y.$
Recall that  $\varphi$  is called complete if it maps both the vertices and edges of $X$ onto those of $Y$. Our starting point is that dealing with the quotients of a given graph is equivalent to dealing with the complete homomorphisms from it to any possible target graph (Lemmata \ref{nat-comp} and \ref{h}).
Unfortunately, $\varphi$ being  complete does not guarantee the image of a component of $X$ being a component of $Y.$ In our opinion, the property which we call ``the natural migration of the components'', is mandatory in order to control the number of components of $X$ by means of those in $Y.$  A first type of homomorphisms for which the components naturally migrate is given by those $\varphi$ which we call {\it tame}, for which vertices with the same image are connected. In that  case the number of components of $X$ and $Y$ is the same (Sections \ref{general} and \ref{hom-pa}). Moreover, there exist classic qualifications of graph homomorphisms which fit well. Recall that $\varphi$ is called locally surjective if it maps the neighborhood of each vertex  of $X$ onto the neighborhood in $Y$ of its image. Locally surjective homomorphisms have a long history in the scientific literature. Everett and Borgatti \cite{Ev} introduced them, with the name of  role colorings, for the analysis of social behavior. Recently this class of homomorphisms has received a lot of attention in theoretical computer science (\cite{FI, Ler}).

We state our main results after establishing some notation. Denote by $V_X$ the vertex set of $X$ and by $E_X$ its edge set; by $C_{X}(x)$ the component  containing $x\in V_X$; by $\mathcal{C}(X)$ the set of components and by $c(X)$ their number.  For every $C'\in \mathcal{C}(Y)$, set $\mathcal{C}(X) _{C'}=\{C\in  \mathcal{C}(X)\ :\  \varphi(C)\subseteq C' \}$; for every $y\in V_Y$ and
$\hat{X}=(V_{\hat{X}},E_{\hat{X}})$ subgraph of $X$,  put $k_{\hat{X}}(y)=|V_{\hat{X}}\cap \varphi^{-1}(y) |$,
$\mathcal{C}(X)_{y}=\{C\in\mathcal{C}(X)\ :\  k_C(y)>0\}|$ and $c(X)_{y}=|\mathcal{C}(X)_{y}|$. Denote by $\sim_{\varphi}$ the equivalence relation induced by $\varphi$ and, for every $x\in V_X$, by $C_{X}(x)/\hspace{-1mm}\sim_{\varphi}$ the quotient graph of $C_{X}(x)$ with respect to $\sim_{\varphi}$.

\begin{thmA}\label{loc-sur-com} Let $X$, $Y$ be graphs and $\varphi:X \rightarrow Y$ be a locally surjective homomorphism.
 \begin{itemize}
\item[(i)] If $C\in \mathcal{C}(X)$, then $\varphi(C)\in \mathcal{C}(Y)$. In particular, the image of $X$ is a union of components of $Y$.
\item[(ii)] For every $x\in V_X$, $\varphi(C_{X}(x))=C_{Y}(\varphi(x))\cong C_{X}(x)/\hspace{-1mm}\sim_{\varphi}.$
\item[(iii)] For every $C\in\mathcal{C}(X)$,  $\varphi^{-1}(\varphi(V_C))=\displaystyle{\bigcup_{\hat{C}\in  \mathcal{C}(X)_{\varphi(C)}}V_{\hat{C}}}$.
\item[(iv)] For $1\leq i\leq c(Y)$, let $y_i\in V_Y$ be such that $\mathcal{C}(Y)=\{C_Y(y_i): 1\leq i\leq c(Y)\}$. Then
 \begin{equation}\label{formula}
 c(X)=\sum_{i=1}^{c(Y)}c(X)_{y_i}
 \end{equation}
\end{itemize}
\end{thmA}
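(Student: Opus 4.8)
The plan is to establish (i) first and by a direct argument, and then to derive (ii), (iii) and (iv) from (i) together with the correspondence between complete homomorphisms and quotient graphs recorded in Lemmata~\ref{nat-comp} and \ref{h}. For (i), fix $C\in\mathcal C(X)$. Since $C$ is connected and $\varphi$ is a homomorphism, $\varphi(C)$ is a connected subgraph of $Y$ and hence lies in a unique component $C'$ of $Y$; the point is to upgrade this to the equality $\varphi(C)=C'$, and this is exactly where local surjectivity enters. The crucial observation is that $\varphi(V_C)$ is closed under taking $Y$-neighbours: if $y=\varphi(x)$ with $x\in V_C$ and $y'$ is adjacent to $y$ in $Y$, then local surjectivity gives $y'=\varphi(x')$ for some $x'$ adjacent to $x$ in $X$, whence $x'\in V_C$ and $y'\in\varphi(V_C)$. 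Following a path in the connected graph $C'$ from a vertex of $\varphi(V_C)$ to an arbitrary vertex of $C'$ yields $V_{C'}\subseteq\varphi(V_C)$, so $V_{\varphi(C)}=V_{C'}$. For the edges, given $y_1y_2\in E_{C'}$ pick $x_1\in V_C$ with $\varphi(x_1)=y_1$ and, by local surjectivity, $x_2$ adjacent to $x_1$ with $\varphi(x_2)=y_2$; then $x_1x_2\in E_X$ (if $y_1y_2$ is not a loop then $x_1\neq x_2$, and loops are present by reflexivity anyway), so $y_1y_2=\varphi(x_1x_2)\in E_{\varphi(C)}$. Hence $\varphi(C)=C'\in\mathcal C(Y)$; since every edge of $X$ lies in a component, $\varphi(X)=\bigcup_{C\in\mathcal C(X)}\varphi(C)$ is a union of components of $Y$.

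For the remaining parts, (i) does most of the work. By (i), $\varphi(C_X(x))$ is a component of $Y$ containing $\varphi(x)$, so $\varphi(C_X(x))=C_Y(\varphi(x))$; moreover $\varphi|_{C_X(x)}$ is a complete homomorphism onto its image inducing on $C_X(x)$ precisely the relation $\sim_\varphi$, so Lemma~\ref{h} gives $\varphi(C_X(x))\cong C_X(x)/\hspace{-1mm}\sim_\varphi$, which is (ii). For (iii): by (i) and the definition of $\mathcal C(X)_{\varphi(C)}$, a component $\hat C$ of $X$ lies in $\mathcal C(X)_{\varphi(C)}$ if and only if $\varphi(\hat C)=\varphi(C)$; the inclusion $\supseteq$ is then immediate, and for $\subseteq$ one observes that $\varphi(x)\in\varphi(V_C)$ forces $\varphi(C_X(x))$ and $\varphi(C)$ to be two components of $Y$ sharing the vertex $\varphi(x)$, hence equal, so $C_X(x)\in\mathcal C(X)_{\varphi(C)}$. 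For (iv): by (i) each $C\in\mathcal C(X)$ satisfies $\varphi(C)=C_Y(y_i)$ for exactly one index $i$, and $\varphi(C)=C_Y(y_i)$ is equivalent to $C\in\mathcal C(X)_{y_i}$; thus the family $\{\mathcal C(X)_{y_i}\}_{i=1}^{c(Y)}$ partitions $\mathcal C(X)$ and \eqref{formula} follows by counting.

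The step I expect to be the main obstacle is Part (i): one must resist viewing $\varphi(C)$ as merely ``a connected subgraph'' and instead use local surjectivity to propagate surjectivity along every edge and along arbitrary paths of the target component, while taking care to argue for the edge set and not only the vertex set, and to ensure the reflexive-loop convention does not introduce spurious edges. Once (i) is in place, the remaining parts are essentially bookkeeping about which component of $Y$ a given component of $X$ is mapped onto, and I would be careful only about the distinction between subgraph containment of images and equality of components, which is harmless here precisely because of (i).
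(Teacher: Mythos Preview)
Your proof is correct and follows the same overall architecture as the paper: establish (i) first, then read off (ii)--(iv) using Lemma~\ref{h} and the bookkeeping of which components of $X$ land in which component of $Y$. Parts (ii), (iii) and (iv) in your write-up match the paper's arguments (Proposition~\ref{hom-con}\,(ii)--(iii) and Lemma~\ref{second}) essentially verbatim.

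The one place where you diverge is in the proof of (i). The paper first establishes the result for pseudo-covering maps (locally strong plus surjective) by a maximality contradiction, and then reduces the general locally surjective case to that one by restricting $\varphi$ to $C\to C'$ and invoking an external reference (\cite[Observation~2.4]{FI}) to obtain surjectivity of the restriction onto the connected target $C'$. You instead argue directly from local surjectivity: $\varphi(V_C)$ is closed under $Y$-neighbours, hence absorbs all of $V_{C'}$ by propagating along a path, and the same local lifting gives the edges. Your route is more self-contained --- it effectively reproves the cited observation inline and avoids the detour through the pseudo-covering class --- while the paper's route has the virtue of tying (i) into the $\mathrm{PC}$ machinery used elsewhere. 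Both are equally valid; your direct argument is arguably cleaner for a stand-alone proof of Theorem~A.
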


While the numbers $c(X)_{y_i}$ in Formula \eqref{formula} are generally difficult to compute explicitly, in a number of applications the following property gives a more manageable formula. We call $\varphi$ {\it component equitable} whenever, for every $y\in V_Y$, every component in $\mathcal{C}(X)_{y}$ intersects  the fibre $\varphi^{-1}(y)$ in a set of the same size (Section \ref{equi}). Component equitability transfers the well known notion of an equitable partition (\cite[Section 5.1]{god2}) to components rather than neighborhoods. If $\varphi$ is both locally surjective and component equitable, then $c(X)_{y_i}=k_{X}(y_i)/k_{C_i}(y_i),$ where $k_{X}(y_i)=|\varphi^{-1}(y_i)|$ and $C_i\in \mathcal{C}(X)_{y_i}$ (Proposition \ref{union2}).

The most important subset of component equitable homomorphisms is given by the so called {\it orbit homomorphisms}, that is, those homomorphisms $\varphi$  for which the equivalence classes of $\sim_{\varphi}$ in $V_X$ coincide with the orbits of a suitable group of graph automorphisms of $X$. Since the complete orbit homomorphisms are necessarily locally surjective (Proposition \ref{union2cons}), as a consequence of Theorem A, we get the following important result.

\begin{thmB}\label{summary} Let $X$, $Y$ be graphs and $\varphi:X \rightarrow Y$ be a complete orbit homomorphism. For $1\leq i\leq c(Y)$,
let $y_i\in V_Y$ be such that $\mathcal{C}(Y)=\{C_Y(y_i): 1\leq i\leq c(Y)\}$ and $C_i\in \mathcal{C}(X)_{y_i}.$ Then
\begin{equation}\label{best-for}
c(X)=\sum_{i=1}^{c(Y)}\frac{k_{X}(y_i)}{k_{C_i}(y_i)}
\end{equation}
\end{thmB}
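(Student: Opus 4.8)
The plan is to derive Theorem B as a direct corollary of Theorem A together with the cited Propositions \ref{union2} and \ref{union2cons}. First I would invoke Proposition \ref{union2cons} to observe that since $\varphi$ is a complete orbit homomorphism, it is locally surjective; hence all conclusions of Theorem A are available. In particular, part (iv) of Theorem A gives the decomposition $c(X)=\sum_{i=1}^{c(Y)}c(X)_{y_i}$ for the chosen representatives $y_i\in V_Y$, $1\le i\le c(Y)$. So the whole task reduces to rewriting each summand $c(X)_{y_i}$ in the claimed closed form $k_X(y_i)/k_{C_i}(y_i)$.

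For that step, I would note that an orbit homomorphism is component equitable: this is stated earlier as the fact that orbit homomorphisms form ``the most important subset of component equitable homomorphisms,'' and it should follow because the orbits of a group $G\le \mathrm{Aut}(X)$ that are contained in a single component $C$ all have the same size within $C$ — indeed, if $g\in G$ moves some vertex of $C$, then $g$ maps $C$ to a component, and the intersection pattern of the $G$-orbit forming the fibre $\varphi^{-1}(y)$ with each component it meets has constant cardinality by a transitivity/orbit-counting argument. Granting component equitability, $\varphi$ is both locally surjective and component equitable, so Proposition \ref{union2} applies and yields exactly $c(X)_{y_i}=k_X(y_i)/k_{C_i}(y_i)$, where $k_X(y_i)=|\varphi^{-1}(y_i)|$ and $C_i$ is any component in $\mathcal{C}(X)_{y_i}$ (the value being independent of the choice of $C_i$ precisely by component equitability). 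Substituting into Formula \eqref{formula} then gives Formula \eqref{best-for}.

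The main obstacle — or rather the only place where any real content enters beyond bookkeeping — is confirming that the hypotheses of Proposition \ref{union2} genuinely hold for a complete orbit homomorphism, i.e. that ``complete orbit'' implies both ``locally surjective'' and ``component equitable.'' The first implication is exactly Proposition \ref{union2cons}, so it can be quoted. The second implication should be the observation made just before the statement of Theorem B that orbit homomorphisms are component equitable; if that is not formally recorded as a numbered result, I would insert a one-line justification here: for $y\in V_Y$ with fibre $O=\varphi^{-1}(y)$ a union of $G$-orbits (in fact a single orbit, since $\sim_\varphi$-classes are orbits), and for a component $C$ meeting $O$, the stabilizer in $G$ of the component $C$ acts on $V_C\cap O$ with all the transitivity needed to force $|V_C\cap O|$ to be constant over the components in $\mathcal{C}(X)_{y}$ — because $G$ permutes these components transitively among themselves while preserving $O$. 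Once that is in place, everything else is substitution, so I would keep the written proof to a few lines: cite Proposition \ref{union2cons}, cite Theorem A(iv), cite (or prove in one line) component equitability, cite Proposition \ref{union2}, and combine.
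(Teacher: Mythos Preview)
Your proposal is correct and follows essentially the same route as the paper: the paper's proof cites Propositions \ref{propc} and \ref{union2cons} to obtain $\mathrm{O}(X,Y)\cap\mathrm{Com}(X,Y)\subseteq \mathrm{CE}(X,Y)\cap \mathrm{LSur}(X,Y)$, and then combines Lemma \ref{second} (equivalently Theorem A\,(iv)) with Proposition \ref{union2}. Note that both facts you worried about are already numbered results---local surjectivity of complete orbit homomorphisms is part of the chain \eqref{sub2} in Proposition \ref{propc}, and component equitability of orbit homomorphisms is Proposition \ref{union2cons}\,(ii)---so no additional one-line argument is needed.
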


We exhibit a precise algorithmic procedure (Procedure \ref{procedure}) for the computation of Formula \eqref{best-for}.
Moreover, we give some results to control the isomorphism class and the properties of the components (Corollaries \ref{isolated} and \ref{connection}, Proposition \ref{union2cons}\,(i) and Section \ref{iso-class}).

One of the motivations of our research is to produce a rigorous method to count the components of the proper power graph of a finite group $G$ through the knowledge of the components of some of its quotients.  Recall that the
 power graph of $G$ is the graph $P (G)$ with $V_{P (G)}=G$ and $\{x,y\}\in E_{P (G)}$, for $x,y\in G$, if there exists $m\in\mathbb{N}$ such that $x=y^m$ or $y=x^m$. The  proper  power graph $P_0 (G)$  is defined as the $1$-deleted subgraph  of $P(G).$ While $P (G)$  is obviously connected, $P_0 (G)$ may not be, and the counting of its components is an interesting topic. The reader is referred to \cite{sur} for survey about power graphs.
 In two forthcoming papers, \cite{BIS1} and \cite{BIS2}, we will apply the general method developed here to that issue, with particular attention to permutation groups. Actually, if $G$ is the symmetric or the alternating group there exists a complete orbit homomorphism which is very natural to be considered for an application of Theorem B. Those results  seem  to also have promising applications to simple and almost simple groups.

In addition to developing the tools for counting components of a graph using homomorphisms, we also compare various classes of homomorphisms (Lemma \ref{intersection}, Propositions \ref{propc} and \ref{union2cons}).

\vskip 0.6 true cm
\section{\bf Graphs}\label{graphs}
\vskip 0.4 true cm
For a finite set $A$ and $k\in \mathbb{N}$, let $\binom{A}{k}$ be the set of the subsets of $A$ of size $k.$
A {\it graph} $X=(V_X,E_X)$ is a pair of finite sets such that $V_X\neq\varnothing$ is the set of vertices, and $E_X$
 is the set of edges which is the union of the set of {\it loops} $L_X=\binom{V_X}{1}$
and a set of {\it proper edges} $E^*_X\subseteq \binom{V_X}{2}$. Note that $E^*_X$ may be empty. We usually specify the edges of a graph $X$ giving only $E^*_X$.

Let $X$ be a graph. A {\it subgraph} $\hat{X}=(V_{\hat{X}},E_{\hat{X}})$ of $X$ is a graph such that $V_{\hat{X}}\subseteq V_{X}$ and $E_{\hat{X}}\subseteq E_{X}$. If  $\hat{X}$ is a subgraph of $X$, we write $\hat{X}\subseteq X.$ For $s\in\mathbb{N}\cup\{0\}$, a subgraph $\gamma$ of $X$ such that $V_{\gamma}=\{x_i: 0\leq i\leq s\}$ with distinct $x_i\in V_X$ and $E^*_{\gamma}=\{\{x_i,x_{i+1}\} : 0\leq i\leq s-1\}$, is called a  {\it path} of length $s$ between $x_0$ and $x_s$.
Given $U\subseteq V_X$, the {\it subgraph induced} by $U$ is the subgraph $\hat{U}$ of $X$ having $V_{\hat{U}}=U$ and $E_{\hat{U}}=\{\{x_1,x_2\}\in E_X:x_1,x_2\in U\}$. A subgraph is called {\it induced} if it is the subgraph induced by some subset of vertices.
Two vertices  $x_1,x_2\in V_X$ are said to be {\it connected} in $X$ if  there exists a path between $x_1$ and $x_2.$  $X$ is called  {\it connected} if every pair of its vertices is connected.
It is well known that connectedness is an equivalence relation on $V_X.$
 Any subgraph  of $X$ induced by a connectedness equivalence class, is called a {\it component} of $X$. Equivalently, a component of $X$ is a maximal connected subgraph of $X$. It is easily checked that the vertices (the edges) of the components of $X$ give a partition of $V_X$ ($E_X$); a connected subgraph $\hat{X}$ of $X$  is a component
 if and only if $x_1\in V_{\hat{X}}$ and $\{x_1,x_2\}\in E_X$ imply $\{x_1,x_2\}\in E_{\hat{X}}.$ The component of $X$ containing $x\in V_X$ is denoted by $C_{X}(x)$.
If the only vertex of $C_{X}(x)$ is $x$, we say that $x$ (the component $C_{X}(x)$) is an {\it isolated vertex}.
The set of components of $X$ is denoted by $\mathcal{C}(X)$ and its size by $c(X).$
Given $x\in V_X$, the {\it neighborhood} of $x$ is the subset of $V_X$ defined by $N_X(x)=\{u\in V_X: \{x,u\}\in E_X\}.$ Note that $x\in N_X(x)$ by reflexivity.  

When dealing with a unique fixed  graph $X$, we usually omit the subscript $X$ in all the above notation. The terminology not  explicitly introduced is standard and can be find in \cite{dl}.

\vskip 0.6 true cm
\section{\bf  Quotient graphs and number of components}\label{general}
\vskip 0.4 true cm

Let $X=(V,E)$ be a graph and  $\sim$ be an equivalence relation on $V$. For every $x\in V,$ denote by $[x]$ the equivalence class of $x$ and call it a {\it cell}. Thus, for $x,y\in V$, we have  $[x]=[y]$ if and only if $x\sim y$ and the elements of the partition $V/\hspace{-1mm}\sim$ of $V$ associated to $\sim$ are represented by $[x]$, for $x\in V.$ The {\it quotient graph} of $X$ with respect to $\sim$, denoted by $X/\hspace{-1mm}\sim,$ is the graph with vertex set $[V]=V/\hspace{-1mm}\sim$ and edge set $[E]$  defined as follows: for every $[x_1]\in [V]$ and $[x_2]\in [V]$,  $\{[x_1],[x_2]\}\in [E]$ if  there exist $\tilde{x}_1,\tilde{x}_2\in V$ such that $\tilde{x}_1\sim x_1,\  \tilde{x}_2\sim x_2$ and $ \{\tilde{x}_1,\tilde{x}_2\}\in E.$

Passing from a  graph $X$ to a quotient graph $X/\hspace{-1mm}\sim$ reduces the complexity and obviously different equivalence relations  imply different levels of complexity reduction.  For instance, in the extreme case of the total equivalence relation, which reduces $X$ to a single vertex, all information about the graph $X$ is lost.
By an appropriate choice of the equivalence relation, we may produce a less complex quotient graph while maintaining a relationship between components of the graph and its quotient. The easiest case is when the equivalence classes are each contained in a single component, in which case  $c(X)=c(X/\hspace{-1mm}\sim).$
 \begin{defn}\label{sim}  {\rm  Let  $X=(V,E)$ be a graph and $\sim$ be an equivalence relation  on $V$. We say that
$\sim$  is {\it tame} if for every $x,\tilde{x}\in V$, $[x]=[\tilde{x}]$ implies $C_X(x)=C_X(\tilde{x}).$
We say that $X/\hspace{-1mm}\sim$ is a {\it tame} quotient of $X$  if  $\sim$  is tame.}
\end{defn}
Obviously every graph $X$ admits tame equivalence relations on its vertex set. One example is given by the relation identifying all the vertices in the same component.
 Note also that, if $X$ is connected, each equivalence relation $\sim$ on $V$ is tame.

\begin{prop}\label{quotient-graph}   Let  $X=(V,E)$ be a graph and $\sim$ be an equivalence relation  on $V$. Then:
\begin{itemize}
\item[(i)] $c(X/\hspace{-1mm}\sim)\leq c(X)$;
\item[(ii)] $c(X/\hspace{-1mm}\sim)=c(X)$ if  and only if $\sim$ is tame;
\item[(iii)]  $X$ is connected if and only if $X/\hspace{-1mm}\sim$ is connected and tame.
\end{itemize}
\end{prop}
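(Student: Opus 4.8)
The plan is to exploit the natural projection $\pi\colon V\to[V]$, $x\mapsto[x]$, which respects adjacency in the sense that $\{x_1,x_2\}\in E$ implies $\{[x_1],[x_2]\}\in[E]$, directly from the definition of $[E]$. Consequently, if $x_1$ and $x_2$ are connected in $X$ then $[x_1]$ and $[x_2]$ are connected in $X/\hspace{-1mm}\sim$, since a path in $X$ maps to a walk in $X/\hspace{-1mm}\sim$ from which a path can be extracted; in particular, for each $C\in\mathcal{C}(X)$ the set $\{[x]:x\in V_C\}$ lies in a single component of $X/\hspace{-1mm}\sim$. This yields a well-defined map $\Phi\colon\mathcal{C}(X)\to\mathcal{C}(X/\hspace{-1mm}\sim)$ sending $C$ to that component, and $\Phi$ is surjective because $\pi$ is onto $[V]$. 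Part (i) follows at once: a surjection between finite sets gives $c(X/\hspace{-1mm}\sim)=|\mathcal{C}(X/\hspace{-1mm}\sim)|\le|\mathcal{C}(X)|=c(X)$.

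For (ii) I would prove that $\sim$ is tame if and only if $\Phi$ is injective; the statement then follows, since a surjection between finite sets is a bijection precisely when it is injective, and is strictly size-decreasing otherwise. If $\sim$ is not tame, choose $x,\tilde x\in V$ with $[x]=[\tilde x]$ but $C_X(x)\ne C_X(\tilde x)$: then $\Phi(C_X(x))$ and $\Phi(C_X(\tilde x))$ are both the component of $X/\hspace{-1mm}\sim$ containing $[x]=[\tilde x]$, so $\Phi$ is not injective. Conversely, assume $\sim$ is tame and $\Phi(C_X(x))=\Phi(C_X(y))$, i.e.\ $[x]$ and $[y]$ are joined by a path $[x]=[z_0],[z_1],\dots,[z_s]=[y]$ in $X/\hspace{-1mm}\sim$. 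For each $i$, the edge $\{[z_{i-1}],[z_i]\}\in[E]$ provides $a_i\sim z_{i-1}$ and $b_i\sim z_i$ with $\{a_i,b_i\}\in E$, so $C_X(a_i)=C_X(b_i)$; tameness then gives $C_X(z_{i-1})=C_X(a_i)=C_X(b_i)=C_X(z_i)$. Chaining over $i$, and applying tameness once more at the endpoints ($x\sim z_0$, $y\sim z_s$), yields $C_X(x)=C_X(z_0)=C_X(z_s)=C_X(y)$, so $\Phi$ is injective.

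Part (iii) is then a formal consequence of (i) and (ii). If $X$ is connected, then $c(X)=1$, so $c(X/\hspace{-1mm}\sim)=1$ by (i) together with the fact that every graph has at least one component; moreover every equivalence relation on $V$ is tame, since all vertices lie in the unique component of $X$, as observed after Definition~\ref{sim}. Hence $X/\hspace{-1mm}\sim$ is connected and tame. Conversely, if $X/\hspace{-1mm}\sim$ is connected and tame, then $c(X)=c(X/\hspace{-1mm}\sim)=1$ by (ii), so $X$ is connected.

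The only delicate point is the injectivity direction of (ii): the defining condition on $[E]$ does not let one lift the quotient path edge by edge to an honest path in $X$ (the witnesses $a_i,b_i$ need not be the $z_i$'s, and $b_i$ need not equal $a_{i+1}$), so one cannot argue at the level of paths. The resolution is that tameness lets one replace the path data by the coarser invariant ``which component of $X$ a vertex belongs to'', and it is exactly this invariant that is forced to be constant along the chain. Everything else is bookkeeping with the definitions of quotient graph and component and with finite cardinalities.
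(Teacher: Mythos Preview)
Your proof is correct and follows essentially the same route as the paper: both define the natural map $\mathcal{C}(X)\to\mathcal{C}(X/\hspace{-1mm}\sim)$, $C_X(x)\mapsto C_{X/\sim}([x])$, use its surjectivity for (i), identify equality in (ii) with injectivity of this map, and establish the injectivity under tameness by lifting a quotient path edge by edge via the defining witnesses and using tameness to reconcile the endpoints. Your final paragraph even makes explicit the exact subtlety the paper handles implicitly, namely that one cannot lift to a single path in $X$ but only propagate the component label.
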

\begin{proof} Note first that the map
$f: \mathcal{C}(X)\rightarrow \mathcal{C}(X/\hspace{-2mm}\sim)$ defined by $f(C_{X}(x))=C_{X/\sim}([x])$ for all $x\in V$
is well defined as the quotient construction respects adjacency, and hence connectedness of any pair of vertices.

(i) The map $f$ is obviously surjective, so $$c(X/\hspace{-1mm}\sim)=|\mathcal{C}(X/\hspace{-1mm}\sim)|\leq |\mathcal{C}(X)|=c(X).$$

(ii) By (i) and by the definition of $f$, $c(X/\hspace{-1mm}\sim)=c(X)$ holds if and only if $C_{X/\sim}([x])=C_{X/\sim}([y])$ implies $C_{X}(x)=C_{X}(y)$ for all $x,y\in V.$
 Suppose $c(X/\hspace{-1mm}\sim)=c(X)$ and let $[x]=[y]$, for some $x,y\in V.$ Then $C_{X/\sim}([x])=C_{X/\sim}([y])$ and therefore $C_{X}(x)=C_{X}(y)$, so $\sim$ is tame. Conversely, suppose $\sim$ is tame and let $C_{X/\sim}([x])=C_{X/\sim}([y])$ for some $x,y\in V.$ Then  in $X/\hspace{-1mm}\sim$ there is a path $\gamma$ between $[x]$ and $[y]$.
 Observe first that if $u,v\in V$ are such that $\{[u],[v]\}\in [E]$, then $u$ and $v$ are connected in $X$. Indeed, by definition of edge in a quotient graph, there exist $\tilde{u},\tilde{v}\in V$ such that $[\tilde{u}]=[u],\  [\tilde{v}]=[v]$ and $ \{\tilde{u},\tilde{v}\}\in E.$ Thus $\tilde{u}$ and $\tilde{v}$ are connected in $X$ and,  $\sim$ being tame, $u$ and $\tilde{u}$ as well as $v$ and $\tilde{v}$ are connected in $X$. By transitivity of connectedness, we then also have $u$ and $v$ connected in $X$.
Now, by an obvious inductive argument on the length of $\gamma$, we deduce that $x$ and $y$ are connected in $X$. Thus $C_{X}(x)=C_{X}(y).$

(iii)  Let $X$ be connected. Then $\sim$ is trivially tame. Moreover $c(X)=1$ so that, by (i), $c(X/\hspace{-1mm}\sim)=1$ which says that $X/\hspace{-1mm}\sim$ is connected.
Conversely, let $X/\hspace{-1mm}\sim$ be connected and tame. Then, by (ii), $c(X)=c(X/\hspace{-1mm}\sim)=1$ and so $X$ is connected.
\end{proof}

\vskip 0.6 true cm
\section{\bf Homomorphisms of graphs and partitions}\label{hom-pa}
\vskip 0.4 true cm

Let $X$ be a graph and suppose that you want to compute $c(X)$ by looking at the components of a quotient $X/\hspace{-1mm}\sim$ whose components are easier to interpret. To that end, dealing only with tame quotients is surely too restrictive. It turns out to be useful to introduce quotients which substantially reduce the complexity of $X$ at the cost of changing, in some controlled way, the number of components. To develop this idea we must  isolate a set of crucial definitions qualifying  the graph homomorphisms. Recall that the word graph always means a finite, undirected, simple and reflexive graph. Throughout the next sections,  let $X$, $Y$ be fixed graphs. We do not explicitly repeat that assumption any more.

\subsection{Maps and admissibility} Let $A$ be a set and
 $\varphi:V_X \rightarrow A$ be a map. For every $y\in A$ the subset of $V_X$ given by $\varphi^{-1}(y)$ is called the {\it fibre} of $\varphi$ on $y$. The relation $\sim_{\varphi}$
 on $V_X$ defined, for every $x,y\in V_X$, by $x\sim_{\varphi}y$ if $\varphi(x)=\varphi(y),$ is an equivalence relation. The equivalence classes of  $\sim_{\varphi}$ are called $\varphi$-{\it cells} and coincide with the nonempty fibres of $\varphi$. We call $\sim_{\varphi}$ the equivalence relation induced by $\varphi$ and denote the corresponding quotient graph  by $X/\hspace{-1mm}\sim_{\varphi}$. The above considerations allow us to transfer terminology from partitions to maps.

 Given $U\subseteq V_X$ and $y\in A$, define the {\it multiplicity} of $y$ in $U$ by the non-negative integer
\[k_{U}(y)=|U\cap \varphi^{-1}(y) |.\]
 In other words $k_{U}(y)$ is the size of the intersection between $U$ and the fibre of $\varphi$ on $y$.
We say that $y$ is  {\it admissible} for $U$ (or $U$ is admissible for $y$), if $k_{U}(y)>0$. Note that $y$ is  admissible for $U$ if and only if $y\in \varphi(U).$ Thus $\varphi(U)$ is the subset of elements of $A$ admissible for $U.$
If $\hat{X}$ is a subgraph of $X$ we adopt the same language referring to $V_{\hat{X}}$ and we define $k_{\hat{X}}(y)$ by  $k_{V_{\hat{X}}}(y).$ In the sequel, the concepts of admissibility and of multiplicity reveal themselves very useful when the subgraph under consideration is a component of $X$.
Note that $k_{X}(y)$ is simply the size of the fibre  $\varphi^{-1}(y)$.
We will usually apply the above ideas when $A$ is the vertex set of some graph.

\subsection{Homomorphisms}
 Let  $\varphi:V_X \rightarrow V_Y$ be  a map. Then $\varphi$ is called
a {\it homomorphism}  from $X$ to $Y$  if, for each $x_1,x_2\in V_X,$ $\{x_1,x_2\}\in E_X$ implies $\{\varphi(x_1),\varphi(x_2)\}\in E_Y$. The set of the homomorphisms  from $X$ to $Y$ is denoted by $\mathrm{Hom}(X,Y).$ $\varphi\in\mathrm{Hom}(X,Y)$ is called  {\it surjective} ({\it injective, bijective}) if $\varphi:V_X\rightarrow V_Y$ is surjective (injective, bijective). We denote the set of surjective homomorphisms from $X$ to $Y$  by $\mathrm{Sur}(X,Y).$

Note that  a map $\varphi:V_X \rightarrow V_Y$ is a homomorphism from $X$ to $Y$ if and only if
\begin{equation}\label{hom-neigh}
\forall x\in V_X,\  \varphi(N_X(x))\subseteq N_Y(\varphi(x)).
 \end{equation}

Let  $\varphi\in\mathrm{Hom}(X,Y).$ Observe that $\varphi$ may map a proper edge of $X$ to a loop of $Y.$
Moreover, $\varphi$ induces a map between $E_X$ and $E_Y,$ associating to  every edge $e=\{x_1,x_2\}\in E_X$, the edge $\varphi(e)=\{\varphi(x_1),\varphi(x_2)\}\in E_Y$. We denote that map  between $E_X$ and $E_Y$  again with $\varphi.$ We also use the notation $\varphi:X \rightarrow Y$ to indicate the homomorphism $\varphi.$

An important example of surjective homomorphism is given by the projection on the quotient. Consider a quotient graph $X/\hspace{-1mm}\sim$ and let $\pi:V_X\rightarrow [V_X]$ be the map defined by $\pi(x)=[x],$ for all $x\in V_X$. If $\{x_1,x_2\}\in E_X$, then  we surely have $\{[x_1],[x_2]\}\in [E_X]$. Thus $\pi\in \mathrm{Sur}(X,X/\hspace{-1mm}\sim)$  and $\pi$ is called the {\it projection} on the quotient graph.

If $\hat{X}$ is a subgraph of $ X$, then  the {\it image} of $\hat{X}$ by $\varphi\in\mathrm{Hom}(X,Y)$ is defined as the subgraph of $Y$ given by $\varphi(\hat{X})=(\varphi(V_{\hat{X}}), \varphi(E_{\hat{X}}))$. Observe that, generally, if  $\hat{X}\subseteq X$ then  $\varphi(\hat{X})$ is not an induced subgraph of $Y$. In particular, the condition $\varphi\in \mathrm{Sur}(X,Y)$  is weaker than $\varphi(X)=Y,$ because the surjectivity requires only $\varphi(V_X)=V_Y$  while $\varphi(X)=Y$ requires both $\varphi(V_X)=V_Y$ and  $\varphi(E_X)=E_Y.$

\begin{defn}\label{comp} {\rm Let $\varphi\in \mathrm{Hom}(X,Y)$. Then $\varphi$ is called:
\begin{itemize}
\item[(a)]  {\it complete} if $\varphi(X)=Y$. We denote the set of  complete homomorphisms from $X$ to $Y$  by $\mathrm{Com}(X,Y)$;
\item[(b)] an {\it isomomorphism} if $\varphi$ is bijective and complete. We denote the set of  isomorphisms from $X$ to $Y$  by $\mathrm{Iso}(X,Y)$. If  $ \mathrm{Iso}(X,Y)\neq \varnothing,$ we say that $X$ and $Y$ are isomorphic and  we write $X\cong Y$;
\item[(c)] {\it tame} if $\sim_{\varphi}$ is tame.  We denote the set of tame homomorphisms from $X$ to $Y$ by $\mathrm{T}(X,Y).$
\end{itemize}}
\end{defn}
We make a few comments on these definitions. First of all note that $\varphi$ is tame if and only if every fibre of $\varphi$ is connected.  Note also that
 the composition of complete homomorphisms is a complete homomorphism and that
\begin{equation}\label{sub}
\mathrm{Iso}(X,Y)\subseteq \mathrm{Com}(X,Y)\subseteq \mathrm{Sur}(X,Y)\subseteq \mathrm{Hom}(X,Y).
\end{equation}
Finally note that,  each homomorphism $\varphi\in \mathrm{Hom}(X,Y)$ induces a complete homomorphism from $X$ to  $\varphi(X)$. Our strong interest in  completeness is motivated by the fact that the projection on the quotient graph  is a complete surjective homomorphism.

\begin{lem}\label{nat-comp} Let $X$ be a graph and $\sim$ an equivalence relation on $V_X$. Then $\pi\in\mathrm{Com}(X,X/\hspace{-1mm}\sim).$
\end{lem}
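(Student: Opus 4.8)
The plan is to verify directly from the definitions that the projection $\pi:V_X\to [V_X]$ is a complete homomorphism, i.e. that $\pi(X)=X/\hspace{-1mm}\sim$. We already know from the discussion preceding Lemma \ref{nat-comp} that $\pi\in\mathrm{Sur}(X,X/\hspace{-1mm}\sim)$, so in particular $\pi(V_X)=[V_X]$; what remains is to establish the equality of edge sets $\pi(E_X)=[E_X]$. The inclusion $\pi(E_X)\subseteq [E_X]$ is just the homomorphism property: if $\{x_1,x_2\}\in E_X$ then $\{[x_1],[x_2]\}\in[E_X]$ by the very definition of the edge set of a quotient graph (taking $\tilde x_i=x_i$).

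For the reverse inclusion $[E_X]\subseteq\pi(E_X)$, I would take an arbitrary edge $\{[x_1],[x_2]\}\in[E_X]$ and unwind the definition of $[E_X]$: there exist $\tilde x_1,\tilde x_2\in V_X$ with $\tilde x_1\sim x_1$, $\tilde x_2\sim x_2$ and $\{\tilde x_1,\tilde x_2\}\in E_X$. Then $\{\tilde x_1,\tilde x_2\}$ is an edge of $X$ whose image under $\pi$ is $\{[\tilde x_1],[\tilde x_2]\}=\{[x_1],[x_2]\}$, since $[\tilde x_i]=[x_i]$. Hence $\{[x_1],[x_2]\}\in\pi(E_X)$. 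One should note that this argument covers loops as well: if $\{[x_1]\}\in L_{X/\sim}$ is a loop, it is in particular an edge of the quotient and the same reasoning applies (indeed a loop $\{x_1\}\in L_X$ maps to the loop $\{[x_1]\}$), so reflexivity of $X/\hspace{-1mm}\sim$ is automatically witnessed. Combining the two inclusions gives $\pi(E_X)=[E_X]$, and together with $\pi(V_X)=[V_X]$ this yields $\pi(X)=X/\hspace{-1mm}\sim$, i.e. $\pi\in\mathrm{Com}(X,X/\hspace{-1mm}\sim)$.

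There is essentially no obstacle here: the statement is a direct consequence of the definition of the quotient graph, whose edge set is designed precisely so that every edge downstairs is the image of an edge upstairs. The only point requiring a small amount of care is to make sure that the surjectivity on vertices is genuinely already in hand (it is, from the paragraph introducing $\pi$) so that the proof only needs to address the edge set. A fully written proof would therefore be just a few lines invoking the definition of $[E_X]$ in both directions.
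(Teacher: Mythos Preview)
Your proposal is correct and follows essentially the same approach as the paper: use that $\pi$ is already known to be surjective, then verify $[E_X]\subseteq\pi(E_X)$ by unwinding the definition of an edge in the quotient to find a preimage edge $\{\tilde x_1,\tilde x_2\}\in E_X$. The paper's proof is slightly terser (it omits the forward inclusion and the remark on loops), but the argument is the same.
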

\begin{proof}  Since $\pi\in \mathrm{Sur}(X,X/\hspace{-1mm}\sim),$ we need only check that $[E_X]\subseteq \pi(E_X).$ Pick  $e=\{[x_1],[x_2]\}\in [E_X]$, with $x_1,x_2\in V_X$. Then, by definition of quotient graph, there exist $\tilde{x}_1,\tilde{x}_2\in V_X$ such that $\tilde{x}_1\sim x_1,\  \tilde{x}_2\sim x_2$ and $ \{\tilde{x}_1,\tilde{x}_2\}\in E_X.$ Thus, we have $\pi(\{\tilde{x}_1,\tilde{x}_2\})=\{\pi(\tilde{x}_1),\pi(\tilde{x}_2)\}=e.$
\end{proof}

The following lemma shows that $X/\hspace{-1mm}\sim_{\varphi}$ is isomorphic to $Y$ when $\varphi\in \mathrm{Com}(X,Y)$ and enables us to interpret every quotient graph of $X$  as the image of $X$ under a complete homomorphism.

 \begin{lem}\label{h}  Let $\varphi\in  \mathrm{Hom}(X,Y)$ and let $$\tilde{\varphi}:X/\hspace{-1mm}\sim_{\varphi}\rightarrow Y$$ be the map defined by $\tilde{\varphi}([x])=\varphi(x)$ for all $[x]\in [V_X].$ Then:
 \begin{itemize}
 \item[(i)] $\tilde{\varphi}$ is an injective homomorphism, and $\tilde{\varphi}$ is surjective if and only if $\varphi$ is surjective;
 \item[(ii)] $\tilde{\varphi}$ is an isomorphism if and only if $\varphi$ is complete.
 \end{itemize}
  \end{lem}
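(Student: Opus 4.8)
The plan is to verify, straight from the definitions of $\sim_{\varphi}$ and of a quotient graph, everything needed for (i), and then to get (ii) by a short formal argument.

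First I would check well-definedness and injectivity together: $[x]=[x']$ means exactly $\varphi(x)=\varphi(x')$, so the prescription $\tilde{\varphi}([x])=\varphi(x)$ does not depend on the chosen representative and, read backwards, the same equivalence gives $\tilde{\varphi}([x])=\tilde{\varphi}([x'])\Rightarrow[x]=[x']$. Next, to see that $\tilde{\varphi}$ is a homomorphism, I would take an edge $\{[x_1],[x_2]\}$ of $X/\sim_{\varphi}$, unwind the definition of the quotient edge set to produce $\tilde{x}_i\sim_{\varphi}x_i$ with $\{\tilde{x}_1,\tilde{x}_2\}\in E_X$, apply that $\varphi\in\mathrm{Hom}(X,Y)$ to get $\{\varphi(\tilde{x}_1),\varphi(\tilde{x}_2)\}\in E_Y$, and then use $\varphi(\tilde{x}_i)=\varphi(x_i)=\tilde{\varphi}([x_i])$. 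The surjectivity clause of (i) is then immediate, since on vertices $\tilde{\varphi}([V_X])=\varphi(V_X)$, so $\tilde{\varphi}$ is surjective iff $\varphi(V_X)=V_Y$.

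For (ii), the key auxiliary fact I would record along the way is that the edge map of $\tilde{\varphi}$ satisfies $\tilde{\varphi}([E_X])=\varphi(E_X)$: indeed $\tilde{\varphi}(\pi(e))=\tilde{\varphi}(\{[x_1],[x_2]\})=\{\varphi(x_1),\varphi(x_2)\}=\varphi(e)$ for every $e=\{x_1,x_2\}\in E_X$, and $\pi(E_X)=[E_X]$ because $\pi\in\mathrm{Com}(X,X/\sim_{\varphi})$ by Lemma \ref{nat-comp}, whence $\tilde{\varphi}([E_X])=\tilde{\varphi}(\pi(E_X))=\varphi(E_X)$. Since $\tilde{\varphi}$ is already known to be injective, it is an isomorphism precisely when it is complete, i.e. precisely when $\tilde{\varphi}([V_X])=V_Y$ and $\tilde{\varphi}([E_X])=E_Y$; by the two identities above this says exactly $\varphi(V_X)=V_Y$ and $\varphi(E_X)=E_Y$, that is, $\varphi\in\mathrm{Com}(X,Y)$.

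I do not anticipate a genuine obstacle. The one step deserving a bit of care is the bookkeeping with the induced edge maps — in particular justifying $\tilde{\varphi}([E_X])=\varphi(E_X)$ rather than taking it for granted — which is why I would route it through the factorization $\varphi=\tilde{\varphi}\circ\pi$ together with the completeness of $\pi$; alternatively, one can observe that completeness of $\varphi$ already forces its surjectivity via \eqref{sub}, so that bijectivity of $\tilde{\varphi}$ comes for free once $\tilde{\varphi}$ is complete.
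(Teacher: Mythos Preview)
Your argument is correct and follows essentially the same route as the paper: for (i) the paper simply cites \cite[Theorem 1.6.10]{kna}, so your explicit verification is a welcome self-contained version; for (ii) the paper uses exactly the same ingredients you do---the factorization $\varphi=\tilde{\varphi}\circ\pi$, Lemma \ref{nat-comp}, and the identity $\tilde{\varphi}([E_X])=\varphi(E_X)$---only organizing them as two separate implications rather than the single biconditional chain you write.
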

\begin{proof} (i) This is just  \cite[Theorem 1.6.10] {kna}.

(ii) Suppose $\varphi$ is complete. Thus $\varphi$ is also surjective and, by (i), $\tilde{\varphi}$ is a bijective homomorphism. On the one hand, due to $\tilde{\varphi}([E_X])=\varphi(E_X)=E_Y,$  $\tilde{\varphi}$ is also complete and hence an isomorphism.
Assume now that $\tilde{\varphi}$ is an isomorphism. By definition of $\tilde{\varphi}$, we have $\varphi=\tilde{\varphi}\circ \pi$. On the other hand, by Lemma \ref{nat-comp}, $\pi$ is complete and by \eqref{sub}, also $\tilde{\varphi}$  is complete. Thus $\varphi$ is complete because it is a composition of complete homomorphisms.
\end{proof}

\subsection{Equitable and orbit partitions}\label{equi-orb}

We recall some classic types of partitions and extend the definitions to the context of homomorphisms.

\begin{defn}\label{def-eq} {\rm Let $\mathcal{P}=\{P_1,\dots, P_k\}$ be a partition of $V_X$. Then $\mathcal{P}$ is called:
\begin{itemize}
\item[(a)]an {\it equitable partition} if, for every $i,j\in \{1,\dots, k\}$, the size of $N_X(x)\cap P_j$ is the same for all  $x\in P_i$.
 We call $\varphi\in  \mathrm{Hom}(X,Y)$ an {\it equitable homomorphism} if the partition into $\varphi$-cells is equitable. The set of equitable homomorphisms is denoted by $ \mathrm{E}(X,Y)$;
\item[(b)]an  {\it orbit partition} if $\mathcal{P}$ is the set of orbits of some $\mathfrak{G}\leq  \mathrm{Aut}(X)$. We call $\varphi\in  \mathrm{Hom}(X,Y)$ an  {\it orbit homomorphism} (with respect to $\mathfrak{G}$) if the partition into $\varphi$-cells is an orbit partition (with respect to $\mathfrak{G}$).
The set of orbit homomorphisms is denoted by $ \mathrm{O}(X,Y).$ If $\varphi\in  \mathrm{O}(X,Y)$ is an orbit homomorphism with respect to $\mathfrak{G}$ we say briefly that $\varphi$ is $\mathfrak{G}$-{\it consistent} or that $\mathfrak{G}$ is $\varphi$-{\it consistent}. \end{itemize}}
\end{defn}

It is well known that any orbit partition is an equitable partition  but the converse does not hold (\cite[Proposition 9.3.5]{Ler}).  Thus we have $ \mathrm{O}(X,Y)\subseteq \mathrm{E}(X,Y)$ with a  proper inclusion in general. Since the partition with each cell containing just a vertex is the orbit partition relative to the identity subgroup of  $\mathrm{Aut}(X)$, we also have $\mathrm{Iso}(X,Y)\subseteq \mathrm{O}(X,Y)\cap \mathrm{Com}(X,Y).$
Once the graph $X$ is fixed, the homomorphisms $\varphi\in \mathrm{O}(X,Y)\cap\mathrm{Com}(X,Y),$ for some graph $Y,$
can be easily described in terms of graph automorphisms of $X$.
Indeed, pick $\mathfrak{G}\leq  \mathrm{Aut}(X)$ and let $\sim_\mathfrak{G}$ be the corresponding orbit partition of $V_X.$ Then the projection  onto the quotient graph $Y=X/\hspace{-1mm}\sim_\mathfrak{G}$ belongs to $\mathrm{O}(X,Y)\cap\mathrm{Com}(X,Y)$. Conversely let $\varphi\in \mathrm{O}(X,Y)\cap\mathrm{Com}(X,Y)$ be an orbit homomorphism with respect to $\mathfrak{G}\leq \mathrm{Aut}(X)$. Then, by Lemma \ref{h}, $\varphi$ coincides up to an isomorphism with the projection on $X/\hspace{-1mm}\sim_{\varphi}=X/\hspace{-1mm}\sim_\mathfrak{G}.$

The following equivalent formulation for the $\varphi$-consistency is immediate.
\begin{lem}\label{phi-con}  Let  $\varphi\in\mathrm{Hom}(X,Y).$ A group $\mathfrak{G}\leq {\mathrm {Aut}}(\Gamma)$ is  $\varphi$-consistent if and only if the following two conditions are satisfied:
 \begin{itemize}
\item[(a)]  $\varphi\circ f=\varphi,\  \forall f\in \mathfrak{G};$
 \item[(b)]  for each $x_1,x_2\in V_X$ with $\varphi(x_1)=\varphi(x_2)$, there exists $f\in \mathfrak{G}$ such that $x_2=f(x_1).$
 \end{itemize}
\end{lem}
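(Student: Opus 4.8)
The plan is to simply unwind the definition of $\mathfrak{G}$-consistency, which asserts that the partition of $V_X$ into $\varphi$-cells coincides with the orbit partition of $\mathfrak{G}$. Recall that the $\varphi$-cells are exactly the nonempty fibres $\varphi^{-1}(\varphi(x))$, $x\in V_X$, and that the orbits $\mathfrak{G}x=\{f(x):f\in\mathfrak{G}\}$ form a partition of $V_X$ by the standard theory of group actions. The key elementary observation I will use is that two partitions of a set $S$ are equal if and only if, for every $s\in S$, the block of the first partition containing $s$ equals the block of the second partition containing $s$; and this in turn is equivalent to the two inclusions between those blocks holding for every $s$. Applied here, $\mathfrak{G}$ is $\varphi$-consistent if and only if $\mathfrak{G}x=\varphi^{-1}(\varphi(x))$ for every $x\in V_X$, which splits into $\mathfrak{G}x\subseteq\varphi^{-1}(\varphi(x))$ and $\varphi^{-1}(\varphi(x))\subseteq\mathfrak{G}x$, to be characterised separately for all $x$.

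For the first inclusion I would observe that $\mathfrak{G}x\subseteq\varphi^{-1}(\varphi(x))$ for every $x\in V_X$ means exactly $\varphi(f(x))=\varphi(x)$ for all $f\in\mathfrak{G}$ and all $x\in V_X$; since this must hold for every $x$, it is precisely the assertion $\varphi\circ f=\varphi$ for all $f\in\mathfrak{G}$, that is, condition (a). For the reverse inclusion, $\varphi^{-1}(\varphi(x))\subseteq\mathfrak{G}x$ for every $x$ says that whenever $x_2\in\varphi^{-1}(\varphi(x_1))$, i.e. $\varphi(x_1)=\varphi(x_2)$, one has $x_2\in\mathfrak{G}x_1$, i.e. $x_2=f(x_1)$ for some $f\in\mathfrak{G}$; this is precisely condition (b). Combining the two, $\mathfrak{G}$ is $\varphi$-consistent if and only if (a) and (b) both hold, and I would write this up either as a short two-directions argument or, more compactly, as the displayed chain of equivalences just sketched.

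Since every step is a direct translation between the set-theoretic language of fibres and partitions and the group-action language of orbits, there is essentially no hard part; the only points to be careful about are to recall explicitly that both the fibres and the orbits genuinely partition $V_X$ (so that comparing the block of each point is legitimate), and to read $\mathrm{Aut}(\Gamma)$ in the statement as $\mathrm{Aut}(X)$. I therefore expect the ``main obstacle'' to be merely presentational: deciding whether to phrase the proof as a biconditional chain or as two implications, and keeping the quantifiers over $f\in\mathfrak{G}$ and $x\in V_X$ in the right order so that condition (a) emerges as $\varphi\circ f=\varphi$ rather than a weaker pointwise statement.
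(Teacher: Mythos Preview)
Your proposal is correct and matches the paper's approach: the paper treats this lemma as immediate (stating only ``The following equivalent formulation for the $\varphi$-consistency is immediate'' and giving no further proof), and your unwinding of the definitions---identifying condition (a) with the inclusion $\mathfrak{G}x\subseteq\varphi^{-1}(\varphi(x))$ and condition (b) with the reverse inclusion---is exactly the straightforward translation the paper has in mind.
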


\vskip 0.6 true cm
\section{\bf  Homomorphisms and  components}
\vskip 0.4 true cm

Given a generic $\varphi\in \mathrm{Hom}(X,Y)$,  the relation between the components in the graphs $X$ and $Y$ is quite poor. Obviously, the following fact holds.

\begin{lem}\label{quasi-path} Let $\varphi\in \mathrm{Hom}(X,Y)$.
If $\hat{X}$  is a connected subgraph of $X$  then $\varphi(\hat{X})$ is connected.
\end{lem}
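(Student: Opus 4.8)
The plan is to reduce connectedness of $\varphi(\hat X)$ to the existence of a path between any two of its vertices, and to produce such a path from a path in $\hat X$ by pushing it forward along $\varphi$.

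First I would pick two vertices $y_1,y_2\in V_{\varphi(\hat X)}=\varphi(V_{\hat X})$ and choose preimages $x_1,x_2\in V_{\hat X}$ with $\varphi(x_i)=y_i$. Since $\hat X$ is connected, there is a path $\gamma\subseteq\hat X$ between $x_1$ and $x_2$, say with vertex sequence $x_1=z_0,z_1,\dots,z_s=x_2$ and $\{z_i,z_{i+1}\}\in E^*_{\hat X}$ for $0\le i\le s-1$. For each such $i$ we have $\{z_i,z_{i+1}\}\in E_{\hat X}$, hence $\varphi(\{z_i,z_{i+1}\})=\{\varphi(z_i),\varphi(z_{i+1})\}$ lies in $\varphi(E_{\hat X})=E_{\varphi(\hat X)}$. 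Therefore the sequence $\varphi(z_0),\varphi(z_1),\dots,\varphi(z_s)$ is a walk in $\varphi(\hat X)$ from $y_1$ to $y_2$: any two consecutive terms are either equal, or joined by a proper edge of $\varphi(\hat X)$.

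The only point deserving a word of care is that $\varphi$ need not send a path to a path, since distinct vertices $z_i$ may be identified by $\varphi$; so the image is merely a walk. But a walk between two vertices always contains a path between them — by the obvious induction on its length, discarding a closed subwalk whenever a vertex is repeated and deleting any stationary step — so $y_1$ and $y_2$ are connected in $\varphi(\hat X)$. As $y_1$ and $y_2$ were arbitrary, $\varphi(\hat X)$ is connected. There is no genuine obstacle here: the statement is a formal consequence of the definition of a homomorphism (equivalently \eqref{hom-neigh}) together with the definition of the image subgraph $\varphi(\hat X)=(\varphi(V_{\hat X}),\varphi(E_{\hat X}))$.
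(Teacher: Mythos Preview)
Your proof is correct. The paper itself does not prove this lemma, stating only that ``Obviously, the following fact holds''; your argument---lifting two image vertices to preimages, using connectedness of $\hat X$ to find a path, pushing it forward to a walk in $\varphi(\hat X)$, and extracting a path---is exactly the standard verification one would expect here.
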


Thus, if $C\in\mathcal{C}(X)$, then $\varphi(C)$ is a connected subgraph of $Y$ but it is not necessarily a component. The best we can say is that there exists a unique component $C'\in\mathcal{C}(Y)$ such that $\varphi(C)\subseteq C'.$ Unfortunately things do not improve if $\varphi\in \mathrm{Com}(X,Y).$
Consider as a very basic example, the graph $X$ with $$V_X=\{1a,1b, 2,3\}, \quad E^*_X=\{\{1a,3\}, \{1b,2\}\}$$ and the equivalence relation $\sim$ on $V_X$ defined only by $1a\sim 1b$. Then $Y=X/\hspace{-1mm}\sim$ is connected and is a path of length $2.$ Now look at the complete homomorphism $\pi:X\rightarrow Y$ given by the natural projection. $\pi$ takes the component $C$ of $X$ having $V_C=\{1a,3\}$ into the connected subgraph $\pi(C)$  such that $V_{\pi(C)}=\{[1a],[3]\}$ and $E^*_{\pi(C)}= \{\{[1a],[3]\}\}$. Thus $\pi(C)$, being  a path of length $1,$ is different from the only component of $Y$.
Nevertheless there is a specific situation which is worth discussing.
\begin{prop}\label{main-component} Let $\varphi\in \mathrm{Com}(X,Y)$ and assume that every component of $X$ apart from a unique $C\in \mathcal{C}(X)$ is an isolated vertex. Let $C'\in \mathcal{C}(Y)$ be the only component of $Y$ such that $\varphi(C)\subseteq C'$. If $V_{C'}=V_{\varphi(C)},$ then $\varphi(C)=C'.$
\end{prop}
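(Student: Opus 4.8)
The plan is to reduce the claim to a comparison of edge sets. Since $\varphi(C)\subseteq C'$ holds as subgraphs, we have both $V_{\varphi(C)}\subseteq V_{C'}$ and $E_{\varphi(C)}\subseteq E_{C'}$; the hypothesis $V_{C'}=V_{\varphi(C)}$ settles the vertex sets, so the whole statement comes down to proving the reverse inclusion $E_{C'}\subseteq E_{\varphi(C)}$.

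First I would exploit completeness. As $\varphi\in\mathrm{Com}(X,Y)$ we have $\varphi(E_X)=E_Y$. By assumption every component of $X$ different from $C$ is an isolated vertex, so it contributes to $E_X$ only its loop; hence $E_X$ and $E_C$ carry exactly the same proper edges. Since a homomorphism maps loops to loops, it follows that every proper edge of $Y$ already lies in $\varphi(E_C)=E_{\varphi(C)}$.

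Next I would treat the two kinds of edges of $C'$ separately. A proper edge $e\in E_{C'}^*$ is in particular a proper edge of $Y$, so by the previous step $e\in E_{\varphi(C)}$. A loop $\{v\}$ of $C'$ has $v\in V_{C'}=V_{\varphi(C)}=\varphi(V_C)$, so $v=\varphi(u)$ for some $u\in V_C$; by reflexivity $\{u\}\in E_C$, whence $\{v\}=\varphi(\{u\})\in\varphi(E_C)=E_{\varphi(C)}$. Putting these together yields $E_{C'}\subseteq E_{\varphi(C)}$, hence $E_{\varphi(C)}=E_{C'}$ and $\varphi(C)=C'$.

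Every step here is a one-line set-theoretic observation, so there is no genuine obstacle; the only point requiring attention is to keep the bookkeeping of loops versus proper edges straight. Specifically, one must notice that the isolated-vertex components carry no proper edges, so that completeness forces all proper edges of $Y$ to originate in $C$, and then use $V_{C'}=V_{\varphi(C)}$ together with reflexivity to recover the loops of $C'$.
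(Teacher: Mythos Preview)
Your proof is correct and follows essentially the same route as the paper's: reduce to the edge inclusion $E_{C'}\subseteq E_{\varphi(C)}$, use completeness plus the fact that the only proper edges of $X$ lie in $C$ to show that any proper edge of $C'$ has a preimage in $E_C$, and handle loops via reflexivity. The only cosmetic difference is that the paper argues element by element (pick $e'\in E^*_{C'}$, pull back, observe the preimage is proper hence in $E_C$) and dispatches the loops in one line by noting that it suffices to treat $E^*_{C'}$, whereas you phrase the proper-edge step globally and treat the loops explicitly; the content is identical.
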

\begin{proof}  We know that
 $C'$ and $\varphi(C)$ have the same vertices so  that we just need to show that they also have the same edges. Since a component is always an induced subgraph, we trivially have $E_{\varphi(C)}\subseteq E_{C'}.$ To show the other inclusion it is enough to show that $E^*_{C'}\subseteq E_{\varphi(C)}.$
Let $e'=\{y_1,y_2\}\in E^*_{C'}$, for some distinct $y_1,y_2\in V_{C'}$. Then, by the completeness of  $\varphi$, there exist $x_1,x_2\in V_X$  such that  $\varphi(x_1)=y_1,\varphi(x_2)=y_2 $ and $e=\{x_1,x_2\}\in E_X.$ As $y_1\neq y_2$ we also have $x_1\neq x_2$. Thus  $e\in E^*_X$, which implies that $x_1$ and $x_2$ are not isolated in $X$. But  if a component of $X$ is not an isolated vertex, it coincides with $C$. It follows that $x_1,x_2\in V_{C}$ and so $e\in E_{C}.$ Hence $e'=\varphi(e)\in E_{\varphi(C)}.$
\end{proof}

We now consider some well known types of homomorphisms.
By \eqref{hom-neigh}, every graph homomorphism $\varphi\in \mathrm{Hom}(X,Y)$ maps $N_X(x)$ into $N_Y(\varphi(x))$  for all  $x\in V_X.$  Denoting by $\varphi_{\mid N_X(x)}:N_X(x)\rightarrow N_Y(\varphi(x))$ the corresponding restriction homomorphism, the {\it locally constrained graph homomorphisms} are those requiring an additional condition on the map $\varphi_{\mid N_X(x)}$  for all  $x\in V_X.$
\begin{defn}\label{classic}
 {\rm Let $\varphi\in \mathrm{Hom}(X,Y)$. Then $\varphi$ is called {\it locally surjective (injective, bijective)} if, for every $x\in V_X$, $\varphi_{\mid N_X(x)}$ is surjective (injective, bijective). We denote the set of the locally surjective (injective, bijective) homomorphisms  by $\mathrm{LSur}(X,Y)$ (by $\mathrm{LIn}(X,Y)$, $\mathrm{LIso}(X,Y)$).}
\end{defn}
 An exhaustive survey of the three types of locally constrained graph homomorphisms  defined above  is given in \cite{FIKRA} to which we refer the reader for a wide overview on the many applications in different areas, from graph theory and combinatorial topology to computer science and social behaviour.
We will be particularly interested in the locally surjective homomorphisms because they represent
a manageable and wide class of homomorphisms which guarantee the natural migration of the components (see Proposition \ref{hom-con}).
Note that, by \eqref{hom-neigh}, $\varphi\in \mathrm{Hom}(X,Y)$ is locally surjective if and only if
\begin{equation}\label{hom-neigh-ls}
\forall x\in V_X,\    N_Y(\varphi(x))\subseteq \varphi(N_X(x)).
 \end{equation}
Note also that being locally surjective does not imply being surjective.

We next recall the  class of locally strong homomorphisms which, appearint for the first time in \cite{Pul}, were later used in the study of the endomorphism spectrum of a graph \cite{kna-paper}.
\begin{defn}\label{locstr}
 {\rm Let $\varphi\in \mathrm{Hom}(X,Y)$. Then $\varphi$ is called {\it locally strong}  if,  for every $x_1,x_2\in V_X$, $\{\varphi(x_1),\varphi(x_2)\}\in E_Y$ implies that, for every  $\tilde{x}_1\in \varphi^{-1}(\varphi(x_1))$,
 there exists $\tilde{x}_2\in \varphi^{-1}(\varphi(x_2))$ such that  $\{\tilde{x}_1,\tilde{x}_2\}\in E_X.$ We denote the set of the locally strong homomorphisms  by $\mathrm{LS}(X,Y).$}
\end{defn}

We show that being locally surjective implies being locally strong and that these two classes coincide in the context of surjective homomorphisms. To this end, we first present a useful characterisation of the locally strong homomorphisms.
\begin{lem}\label{formulation} $\varphi\in \mathrm{LS}(X,Y)$ if and only if, for every $x_1,x_2\in V_X$, $\{\varphi(x_1),\varphi(x_2)\}\in E_Y$ implies that there exists $\tilde{x}_2\in \varphi^{-1}(\varphi(x_2))$ such that $\{x_1,\tilde{x}_2\}\in E_X.$
\end{lem}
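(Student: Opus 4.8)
The plan is to prove the two directions of the biconditional separately, noting first that the reverse implication is essentially trivial and the forward implication is where the real content of Definition~\ref{locstr} gets used.

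For the direction ($\Rightarrow$): assume $\varphi\in\mathrm{LS}(X,Y)$ and let $x_1,x_2\in V_X$ with $\{\varphi(x_1),\varphi(x_2)\}\in E_Y$. We want to produce $\tilde{x}_2\in\varphi^{-1}(\varphi(x_2))$ with $\{x_1,\tilde{x}_2\}\in E_X$. The idea is to apply the defining property of a locally strong homomorphism with the roles of the two vertices chosen so that the ``for every $\tilde{x}_1$'' quantifier is instantiated at $x_1$ itself. Concretely, since $x_1\in\varphi^{-1}(\varphi(x_1))$, Definition~\ref{locstr} applied to the pair $x_1,x_2$ (with $\tilde{x}_1:=x_1$) immediately yields some $\tilde{x}_2\in\varphi^{-1}(\varphi(x_2))$ with $\{x_1,\tilde{x}_2\}\in E_X$, which is exactly the desired conclusion. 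I expect no obstacle here: the stated condition is a literal specialisation.

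For the direction ($\Leftarrow$): assume the ``weak'' condition in the lemma statement, and let $x_1,x_2\in V_X$ with $\{\varphi(x_1),\varphi(x_2)\}\in E_Y$; fix an arbitrary $\tilde{x}_1\in\varphi^{-1}(\varphi(x_1))$. We must find $\tilde{x}_2\in\varphi^{-1}(\varphi(x_2))$ with $\{\tilde{x}_1,\tilde{x}_2\}\in E_X$. The point is that $\varphi(\tilde{x}_1)=\varphi(x_1)$, so $\{\varphi(\tilde{x}_1),\varphi(x_2)\}=\{\varphi(x_1),\varphi(x_2)\}\in E_Y$; hence we may invoke the hypothesis with the pair $\tilde{x}_1,x_2$ in place of $x_1,x_2$. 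This produces $\tilde{x}_2\in\varphi^{-1}(\varphi(x_2))$ with $\{\tilde{x}_1,\tilde{x}_2\}\in E_X$. Since $\tilde{x}_1$ was arbitrary in the fibre over $\varphi(x_1)$, this establishes the locally strong condition, so $\varphi\in\mathrm{LS}(X,Y)$.

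The only subtlety worth flagging — and the closest thing to an ``obstacle'' — is bookkeeping with the equivalence relation $\sim_\varphi$: one must keep track that $\varphi^{-1}(\varphi(x_1))$ and $\varphi^{-1}(\varphi(\tilde{x}_1))$ are the same fibre whenever $\varphi(x_1)=\varphi(\tilde{x}_1)$, so that replacing $x_1$ by an element of its fibre does not change the set over which $\tilde{x}_2$ ranges in the conclusion. Once this is observed, both implications are one-line applications of the relevant definition, and the lemma follows. (Reflexivity of the graphs, which guarantees $x\in N_X(x)$, is not needed for this particular statement, though it is used elsewhere when relating $\mathrm{LS}$ to $\mathrm{LSur}$.)
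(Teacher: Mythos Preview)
Your proof is correct and follows essentially the same approach as the paper's: the forward direction specialises the universal quantifier in Definition~\ref{locstr} to $\tilde{x}_1=x_1$, and the reverse direction replaces $x_1$ by an arbitrary $\tilde{x}_1$ in its fibre before invoking the hypothesis. The paper's argument is identical in structure and detail, so there is nothing to add.
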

\begin{proof} Let $\varphi\in \mathrm{LS}(X,Y)$ and let $x_1,x_2\in V_X$ be such that $\{\varphi(x_1),\varphi(x_2)\}\in E_Y$. Since $x_1\in \varphi^{-1}(\varphi(x_1))$, $\varphi$ locally strong implies that there exists $\tilde{x}_2\in \varphi^{-1}(\varphi(x_2))$  with  $\{x_1,\tilde{x}_2\}\in E_X.$

Assume next that, for every $x_1,x_2\in V_X$, $\{\varphi(x_1),\varphi(x_2)\}\in E_Y$ implies that there exists $\tilde{x}_2\in \varphi^{-1}(\varphi(x_2))$ such that $\{x_1,\tilde{x}_2\}\in E_X.$ We show that $\varphi\in \mathrm{LS}(X,Y)$. Let $x_1,x_2\in V_X$ be such that $e=\{\varphi(x_1),\varphi(x_2)\}\in E_Y$ and pick any $\tilde{x}_1\in \varphi^{-1}(\varphi(x_1))$. Then $e=\{\varphi(\tilde{x}_1),\varphi(x_2)\}$ and so, applying the assumption to $\tilde{x}_1, x_2$, we obtain the existence of $\tilde{x}_2\in \varphi^{-1}(\varphi(x_2))$ such that  $\{\tilde{x}_1,\tilde{x}_2\}\in E_X.$
\end{proof}

\begin{lem}\label{intersection}  Let $X$ and $Y$ be graphs. Then the following hold:
\begin{itemize}\item[(i)] $\mathrm{LSur}(X,Y)\subseteq \mathrm{LS}(X,Y)$;
\item[(ii)]$\mathrm{LS}(X,Y)\cap \mathrm{Sur}(X,Y)=\mathrm{LSur}(X,Y)\cap \mathrm{Sur}(X,Y).$
\end{itemize}
\end{lem}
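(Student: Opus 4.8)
The plan is to deduce both statements from the characterisation of locally strong homomorphisms in Lemma \ref{formulation}, combined with the neighbourhood reformulation \eqref{hom-neigh-ls} of local surjectivity. Nothing deeper is needed; the whole argument is a bookkeeping exercise on fibres and neighbourhoods.

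For (i) I would start with $\varphi\in\mathrm{LSur}(X,Y)$ and verify the condition appearing in Lemma \ref{formulation}. So fix $x_1,x_2\in V_X$ with $\{\varphi(x_1),\varphi(x_2)\}\in E_Y$, i.e.\ $\varphi(x_2)\in N_Y(\varphi(x_1))$. Applying \eqref{hom-neigh-ls} at $x_1$ gives $N_Y(\varphi(x_1))\subseteq\varphi(N_X(x_1))$, hence $\varphi(x_2)\in\varphi(N_X(x_1))$, so there is some $\tilde{x}_2\in N_X(x_1)$ with $\varphi(\tilde{x}_2)=\varphi(x_2)$; that is, $\tilde{x}_2\in\varphi^{-1}(\varphi(x_2))$ and $\{x_1,\tilde{x}_2\}\in E_X$. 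By Lemma \ref{formulation}, $\varphi\in\mathrm{LS}(X,Y)$, which proves (i).

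For (ii), the inclusion $\mathrm{LSur}(X,Y)\cap\mathrm{Sur}(X,Y)\subseteq\mathrm{LS}(X,Y)\cap\mathrm{Sur}(X,Y)$ is immediate from (i). For the reverse inclusion I would take $\varphi\in\mathrm{LS}(X,Y)\cap\mathrm{Sur}(X,Y)$ and check local surjectivity via \eqref{hom-neigh-ls}: fix $x\in V_X$ and $y'\in N_Y(\varphi(x))$. Since $\varphi$ is surjective, choose $x_2\in V_X$ with $\varphi(x_2)=y'$; then $\{\varphi(x),\varphi(x_2)\}=\{\varphi(x),y'\}\in E_Y$, so Lemma \ref{formulation} (applied with $x_1=x$) yields $\tilde{x}_2\in\varphi^{-1}(\varphi(x_2))$ with $\{x,\tilde{x}_2\}\in E_X$. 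Thus $\tilde{x}_2\in N_X(x)$ and $\varphi(\tilde{x}_2)=y'$, so $y'\in\varphi(N_X(x))$. As $x$ and $y'\in N_Y(\varphi(x))$ were arbitrary, $N_Y(\varphi(x))\subseteq\varphi(N_X(x))$ for every $x\in V_X$, i.e.\ $\varphi\in\mathrm{LSur}(X,Y)$.

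I do not anticipate a real obstacle. The only subtlety worth flagging in the write-up is that surjectivity is genuinely used in (ii) — it is what produces a preimage $x_2$ of the neighbour $y'$, and without it a locally strong homomorphism need not be locally surjective — and that the reflexivity convention ($x\in N_X(x)$) makes the degenerate case $y'=\varphi(x)$ automatic, so loops require no separate discussion.
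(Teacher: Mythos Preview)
Your proof is correct and follows essentially the same route as the paper: both parts rely on Lemma~\ref{formulation} together with the neighbourhood characterisation \eqref{hom-neigh-ls} of local surjectivity, and the steps are identical up to notation. Your closing remark on where surjectivity is actually used is a nice addition not made explicit in the paper.
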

\begin{proof}(i) Let $\varphi\in\mathrm{LSur}(X,Y)$. By Lemma \ref{formulation}, we need to show that for every $x_1,x_2\in V_X$, $\{\varphi(x_1),\varphi(x_2)\}\in E_Y$ implies that there exists $\tilde{x}_2\in \varphi^{-1}(\varphi(x_2))$ such that $\{x_1,\tilde{x}_2\}\in E_X.$ Indeed, if $\{\varphi(x_1),\varphi(x_2)\}\in E_Y$, we have that $\varphi(x_2)\in N_Y(\varphi(x_1))$ and, since $\varphi\in\mathrm{LSur}(X,Y)$, we have that $N_Y(\varphi(x_1))=\varphi(N_X(x_1)).$ Hence there exists $\tilde{x}_2\in N_X(x_1)$ such that $\varphi(\tilde{x}_2)=\varphi(x_2)$, which means $\{x_1,\tilde{x}_2\}\in E_X$ and $\tilde{x}_2\in \varphi^{-1}(\varphi(x_2))$.

(ii) By (i), it is enough to show that $\mathrm{LS}(X,Y)\cap \mathrm{Sur}(X,Y)\subseteq\mathrm{LSur}(X,Y)\cap \mathrm{Sur}(X,Y).$
Let then $\varphi\in \mathrm{LS}(X,Y)\cap \mathrm{Sur}(X,Y)$ and show that $\varphi\in\mathrm{LSur}(X,Y).$ We need to see that, for every $x\in V_X$, $\varphi(N_X(x))=N_Y(\varphi(x)).$ One inclusion is obvious by \eqref{hom-neigh} and therefore we need only to show that $N_Y(\varphi(x))\subseteq \varphi(N_X(x)).$ Let $y\in N_Y(\varphi(x))$. Then $\{y,\varphi(x)\}\in E_Y$ and,  $\varphi$ being surjective, there exists $x'\in V_X$ such that $y=\varphi(x').$ Thus, as $\{\varphi(x), \varphi(x')\}\in E_Y$ and $\varphi$ is locally strong, there exists $\tilde{x}'\in V_X$ such that $\{x,\tilde{x}'\}\in E_X$ and $\varphi(\tilde{x}')=\varphi(x')=y.$ Hence $y\in \varphi(N_X(x)).$
\end{proof}
Generally, $\mathrm{LSur}(X,Y)\subsetneq\mathrm{LS}(X,Y).$ Consider, for instance, the graph $X$ with $V_X=\{1\}$ and $E^*_X=\varnothing$; the graph $Y$ with $V_Y=\{a,b\}$ and $E^*_Y=\{\{a,b\}\}$; $\varphi\in\mathrm{Hom}(X,Y)$ defined by $\varphi(1)=a$. Then, trivially, $\varphi\in \mathrm{LS}(X,Y)$ but $\varphi\notin \mathrm{LSur}(X,Y).$

\begin{defn}\label{pseudo}
 {\rm $\varphi\in \mathrm{Hom}(X,Y)$ is called {\it pseudo-covering} if $\varphi\in \mathrm{LS}(X,Y)\cap \mathrm{Sur}(X,Y)$. We denote the set of the pseudo-covering homomorphisms from $X$ to $Y$  by $\mathrm{PC}(X,Y).$}
\end{defn}

Observe that for a projection on a quotient graph, being pseudo-covering is equivalent to being locally strong as well as to being locally surjective.
Lemma \ref{intersection} makes clear two good reasons to adopt the term pseudo-covering. First of all in  \cite[Definition 1.7]{pr} a graph is called a pseudo-cover of its quotient graph when the natural projection is  locally strong. Secondly the word covering is typically used in the context of locally constrained graph homomorphisms. More precisely, if $\varphi\in \mathrm{LIso}(X,Y)\cap  \mathrm{Sur}(X,Y)$, then $\varphi$ is called a covering (\cite[Section 6.8]{god}); if $\varphi\in \mathrm{LIn}(X,Y)$, then $\varphi$ is called a partial covering (\cite{FI2}). So, in some sense, we are filling a vacancy of terminology, with respect to the concept of covering, in the locally surjective case. Note also that pseudo-covering homomorphisms are considered in \cite{FI} with the name of global role assignments. There it is proved that the problem  of deciding if, given a graph $Y$, we have $\mathrm{PC}(X,Y)\neq\varnothing$,  for some input graph $X$, is $\mathrm{NP}$-complete, with the exception of the case in which all the components of $Y$ have at most two vertices.

\begin{lem}\label{h1}  Let $\varphi\in  \mathrm{Com}(X,Y)$ and let $\pi$ the projection of $X$ onto $X/\hspace{-1mm}\sim_{\varphi}$. Then $\pi$ is pseudo-covering (locally surjective, locally injective, locally bijective, locally strong) if and only if $\varphi$ is.
 \end{lem}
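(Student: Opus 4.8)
The plan is to exploit Lemma \ref{h}(ii), which says that since $\varphi\in\mathrm{Com}(X,Y)$, the induced map $\tilde{\varphi}:X/\hspace{-1mm}\sim_{\varphi}\to Y$ is an isomorphism, and that $\varphi=\tilde{\varphi}\circ\pi$. The essential point is that composing with the isomorphism $\tilde{\varphi}$ affects none of the local or covering-type properties, because $\tilde{\varphi}$ carries neighborhoods bijectively onto neighborhoods and carries fibres onto fibres of the same cardinality. So the proof reduces to a bookkeeping verification that each of the six properties is preserved both ways under composition with an isomorphism on the target side.

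First I would record the key translation facts. Write $\bar{X}=X/\hspace{-1mm}\sim_{\varphi}$ and let $\psi=\tilde{\varphi}:\bar{X}\to Y$ be the isomorphism. For every $x\in V_X$ we have $\pi(x)=[x]$, $\varphi(x)=\psi([x])$, and since $\psi\in\mathrm{Iso}(\bar{X},Y)$, $\psi$ maps $N_{\bar{X}}([x])$ bijectively onto $N_Y(\varphi(x))$, and $\psi$ restricted to the image of any neighborhood is a bijection onto the image. Also $\varphi^{-1}(\varphi(x))=\pi^{-1}([x])$ because $\psi$ is injective; so the fibres of $\varphi$ and of $\pi$ coincide as subsets of $V_X$, and consequently $\sim_{\varphi}=\sim_{\pi}$. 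From the identity $\varphi(N_X(x))=\psi\bigl(\pi(N_X(x))\bigr)$ together with $\psi$ being a bijection from $N_{\bar{X}}([x])$ onto $N_Y(\varphi(x))$, the statements ``$\varphi(N_X(x))=N_Y(\varphi(x))$'' and ``$\pi(N_X(x))=N_{\bar{X}}([x])$'' are equivalent, and likewise for the injectivity (hence bijectivity) of the restricted maps; this settles the locally surjective, locally injective and locally bijective cases simultaneously.

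Next I would treat the locally strong case using the characterisation in Lemma \ref{formulation}: $\varphi\in\mathrm{LS}(X,Y)$ iff for all $x_1,x_2\in V_X$ with $\{\varphi(x_1),\varphi(x_2)\}\in E_Y$ there is $\tilde{x}_2\in\varphi^{-1}(\varphi(x_2))$ with $\{x_1,\tilde{x}_2\}\in E_X$. Since $\psi$ is an isomorphism, $\{\varphi(x_1),\varphi(x_2)\}\in E_Y$ iff $\{[x_1],[x_2]\}\in E_{\bar{X}}$, and since $\varphi^{-1}(\varphi(x_2))=\pi^{-1}([x_2])$, the defining condition for $\varphi\in\mathrm{LS}(X,Y)$ is verbatim the defining condition (again via Lemma \ref{formulation}) for $\pi\in\mathrm{LS}(X,\bar{X})$. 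For surjectivity, $\varphi$ surjective iff $\psi\circ\pi$ surjective iff $\pi$ surjective, since $\psi$ is a bijection; as $\pi$ is always surjective onto $\bar{X}$, both $\varphi$ and $\pi$ are in fact surjective, so the surjectivity clause is automatic on the $\pi$ side. Combining the $\mathrm{LS}$ equivalence with the surjectivity of $\pi$ gives, by Definition \ref{pseudo}, that $\pi$ is pseudo-covering iff $\varphi$ is; and by Lemma \ref{intersection} together with surjectivity of $\pi$, for $\pi$ the properties pseudo-covering, locally strong and locally surjective all coincide, as already noted before Lemma \ref{h1}.

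I do not expect any serious obstacle here; the only thing requiring care is making explicit why $\psi=\tilde{\varphi}$ being an isomorphism really does transport neighborhoods and fibres as claimed, i.e.\ invoking that an isomorphism of (reflexive) graphs induces neighborhood-preserving bijections. The rest is a routine ``two equivalent conditions differ only by applying a bijection'' argument carried out once for the neighborhood-based properties and once for the fibre-based local-strongness condition, after which Lemma \ref{intersection} and Definition \ref{pseudo} package everything together. If desired, one can phrase the whole argument uniformly by noting that all six properties are stated purely in terms of the partition $V_X/\hspace{-1mm}\sim_{\varphi}$ and the edge sets, and that $\varphi$ and $\pi$ induce the same partition while their targets $Y$ and $\bar{X}$ are isomorphic via $\tilde{\varphi}$ in a way compatible with $\varphi=\tilde{\varphi}\circ\pi$.
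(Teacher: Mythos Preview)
Your proposal is correct and follows essentially the same approach as the paper: invoke Lemma~\ref{h}(ii) to write $\varphi=\tilde{\varphi}\circ\pi$ with $\tilde{\varphi}$ an isomorphism, and then observe that each of the listed properties is preserved under composition with an isomorphism on the target side (and hence also under composition with its inverse). The paper's proof is a two-line version that simply asserts this preservation fact without spelling it out, whereas you carry out the verification explicitly for neighborhoods and fibres; your added detail is sound and indeed fills in what the paper leaves to the reader.
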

\begin{proof} Using the notation of Lemma \ref{h} we have $\tilde{\varphi}\circ\pi=\varphi$ and, since $\varphi\in \mathrm{Com}(X,Y)$,  $\tilde{\varphi}$ is an isomorphism. Since the composition of a pseudo-covering (locally surjective, locally injective, locally bijective,  locally strong) homomorphism with an isomorphism is pseudo-covering (locally surjective, locally injective, locally bijective,  locally strong), the assertion follows.
 \end{proof}

\begin{prop}\label{propc}  Let $X$, $Y$ and $Z$ be graphs.
\begin{itemize}
\item[(i)] If $\varphi\in \mathrm{PC}(X,Y)$ and $\psi\in \mathrm{PC}(Y,Z),$ then $\psi\circ \varphi\in \mathrm{PC}(X,Z);$
\item[(ii)] \begin{equation}\label{sub2}
\begin{array}{l}
\mathrm{Iso}(X,Y)\subseteq \mathrm{O}(X,Y)\cap \mathrm{Com}(X,Y)\subseteq \mathrm{E}(X,Y)\cap \mathrm{Com}(X,Y)\subseteq \\
 \mathrm{LS}(X,Y)\cap \mathrm{Com}(X,Y)= \mathrm{PC}(X,Y)=\mathrm{LSur}(X,Y)\cap \mathrm{Com}(X,Y)\subseteq \mathrm{Com}(X,Y).
\end{array}
\end{equation}
\end{itemize}
\end{prop}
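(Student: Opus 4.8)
The plan for (i) is to reduce the statement to the fact that $\mathrm{LSur}$ and $\mathrm{Sur}$ are each closed under composition. By Definition \ref{pseudo} together with Lemma \ref{intersection}(ii), one has $\mathrm{PC}(X,Y)=\mathrm{LSur}(X,Y)\cap\mathrm{Sur}(X,Y)$, and similarly for the other two pairs of graphs. A composition of surjective homomorphisms is a surjective homomorphism, so it remains to observe that $\varphi\in\mathrm{LSur}(X,Y)$ and $\psi\in\mathrm{LSur}(Y,Z)$ imply $\psi\circ\varphi\in\mathrm{LSur}(X,Z)$: for every $x\in V_X$ we have $(\psi\circ\varphi)(N_X(x))=\psi(\varphi(N_X(x)))=\psi(N_Y(\varphi(x)))=N_Z((\psi\circ\varphi)(x))$, where in the two middle equalities the inclusion ``$\subseteq$'' is \eqref{hom-neigh} and the inclusion ``$\supseteq$'' comes from local surjectivity of $\varphi$ and of $\psi$ respectively. (One could instead argue directly from Lemma \ref{formulation} that a composition of locally strong homomorphisms whose first factor is surjective is again locally strong, but the route through $\mathrm{LSur}$ is shorter.)

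For (ii) the plan is to verify the chain one link at a time. The first two inclusions are already recorded in Section \ref{equi-orb}: $\mathrm{Iso}(X,Y)\subseteq\mathrm{O}(X,Y)\cap\mathrm{Com}(X,Y)$ because the partition into singletons is the orbit partition of the trivial subgroup of $\mathrm{Aut}(X)$, and $\mathrm{O}(X,Y)\subseteq\mathrm{E}(X,Y)$ because every orbit partition is equitable. The two genuinely new facts to establish are \emph{(a)} $\mathrm{E}(X,Y)\cap\mathrm{Com}(X,Y)\subseteq\mathrm{LSur}(X,Y)$ and \emph{(b)} $\mathrm{LS}(X,Y)\cap\mathrm{Sur}(X,Y)\subseteq\mathrm{Com}(X,Y)$. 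Granting these, the rest is formal: from \emph{(b)} and $\mathrm{Com}\subseteq\mathrm{Sur}$ we get $\mathrm{PC}(X,Y)=\mathrm{LS}(X,Y)\cap\mathrm{Sur}(X,Y)=\mathrm{LS}(X,Y)\cap\mathrm{Com}(X,Y)$; from \emph{(b)} and Lemma \ref{intersection}(ii) we get $\mathrm{PC}(X,Y)=\mathrm{LSur}(X,Y)\cap\mathrm{Sur}(X,Y)=\mathrm{LSur}(X,Y)\cap\mathrm{Com}(X,Y)$; from \emph{(a)} and $\mathrm{LSur}\subseteq\mathrm{LS}$ (Lemma \ref{intersection}(i)) we get $\mathrm{E}(X,Y)\cap\mathrm{Com}(X,Y)\subseteq\mathrm{LS}(X,Y)\cap\mathrm{Com}(X,Y)=\mathrm{PC}(X,Y)$; and $\mathrm{PC}(X,Y)\subseteq\mathrm{Com}(X,Y)$ is trivial.

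To prove \emph{(a)} I would fix $\varphi\in\mathrm{E}(X,Y)\cap\mathrm{Com}(X,Y)$, a vertex $x\in V_X$ and $y\in N_Y(\varphi(x))$, and exhibit $x'\in N_X(x)$ with $\varphi(x')=y$; since $\varphi(N_X(x))\subseteq N_Y(\varphi(x))$ is \eqref{hom-neigh}, this gives local surjectivity. If $y=\varphi(x)$ take $x'=x$. Otherwise $\{y,\varphi(x)\}$ is a proper edge of $Y$, so completeness of $\varphi$ yields $x_1,x_2\in V_X$ with $\{x_1,x_2\}\in E_X$, $\varphi(x_1)=\varphi(x)$ and $\varphi(x_2)=y$. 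Then $x$ and $x_1$ lie in the same $\varphi$-cell $P$, and the $\varphi$-cell $Q=\varphi^{-1}(y)$ satisfies $|N_X(x_1)\cap Q|\geq 1$ because $x_2\in N_X(x_1)\cap Q$; equitability applied to the pair $P,Q$ forces $|N_X(x)\cap Q|=|N_X(x_1)\cap Q|\geq 1$, which produces the required $x'$. For \emph{(b)} I would take $\varphi\in\mathrm{LS}(X,Y)\cap\mathrm{Sur}(X,Y)$ and check $\varphi(E_X)=E_Y$: a loop $\{y\}\in E_Y$ is the image of the loop on any preimage of $y$, while for a proper edge $\{y_1,y_2\}\in E_Y$ surjectivity gives $x_1,x_2$ with $\varphi(x_i)=y_i$, whence $\{\varphi(x_1),\varphi(x_2)\}\in E_Y$, and Lemma \ref{formulation} supplies $\tilde x_2\in\varphi^{-1}(y_2)$ with $\{x_1,\tilde x_2\}\in E_X$ and image $\{y_1,y_2\}$.

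The main obstacle is step \emph{(a)}: it is the single point where a global hypothesis (completeness) is genuinely turned into a local one (local surjectivity), and the argument must be arranged so that equitability is invoked for exactly the cell of $x$ and the fibre $\varphi^{-1}(y)$, with the degenerate case $y=\varphi(x)$ disposed of separately. All the remaining links, including the three equalities, are bookkeeping resting on the inclusions already available and on Lemmata \ref{intersection} and \ref{formulation}.
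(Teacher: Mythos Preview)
Your proof is correct and the overall architecture matches the paper's: the first two inclusions are taken from Section~\ref{equi-orb}, the equalities are bookkeeping from Lemma~\ref{intersection} and Lemma~\ref{formulation}, and the substantive step is the passage from equitable-plus-complete to the locally strong/surjective side. The one genuine difference is in how that step is carried out. The paper does not argue directly with $\varphi$: it invokes Lemma~\ref{h1} to replace $\varphi$ by the projection $\pi:X\to X/\!\sim_{\varphi}$, and then shows $\pi\in\mathrm{LS}$ by using the definition of an edge in a quotient graph to produce the pair $x_1',x_2'$, followed by equitability. You instead stay with $\varphi$ and prove the stronger conclusion $\varphi\in\mathrm{LSur}$ directly, using completeness of $\varphi$ to lift the proper edge $\{\varphi(x),y\}$ and then equitability in the same way; you then recover $\mathrm{LS}$ via Lemma~\ref{intersection}(i). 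Your route is slightly shorter (it bypasses the quotient machinery of Lemmata~\ref{h} and~\ref{h1}) at the cost of having to split off the degenerate case $y=\varphi(x)$, which the quotient formulation absorbs automatically. For your step~(b), the paper says only that $\mathrm{PC}(X,Y)\subseteq\mathrm{Com}(X,Y)$ is ``an obvious consequence of Lemma~\ref{formulation}''; your explicit loop/proper-edge argument is exactly what is meant.
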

\begin{proof}  (i) Straightforward.

(ii) The first two inclusions follow from the discussion in Section \ref{equi-orb}. We show that $\mathrm{E}(X,Y)\cap \mathrm{Com}(X,Y)\subseteq  \mathrm{LS}(X,Y)\cap \mathrm{Com}(X,Y).$ Let $\varphi\in \mathrm{E}(X,Y)\cap \mathrm{Com}(X,Y)$  and show that $\varphi\in  \mathrm{LS}(X,Y).$ By Lemma \ref{h1}, it is enough to show that the natural projection $\pi:X\rightarrow X/\hspace{-1mm}\sim_{\varphi}$ is locally strong. By Lemma \ref{formulation}, we need to see that for every $x_1,x_2\in V_X$, $\{[x_1],[x_2]\}\in E_{ X/\sim_{\varphi}}$ implies that there exists $\tilde{x}_2\in \varphi^{-1}(\varphi(x_2))$ such that $\{x_1,\tilde{x}_2\}\in E_X.$ Now, $\{[x_1],[x_2]\}\in E_{ X/\sim_{\varphi}}$ means that there exist $x'_1, x'_2\in V_X$ such that $\{x'_1,x'_2\}\in E_X$, $\varphi(x'_1)=\varphi(x_1)$ and $\varphi(x'_2)=\varphi(x_2)$. Thus $x'_2\in N_X(x'_1)\cap \varphi^{-1}(\varphi(x_2))$ and $x_1,x'_1$ belong to the same $\varphi$-cell. Since the partition into $\varphi$-cells is equitable, we then have $N_X(x_1)\cap \varphi^{-1}(\varphi(x_2))\neq\varnothing$ and, to conclude, it suffices to pick any $\tilde{x}_2\in N_X(x_1)\cap \varphi^{-1}(\varphi(x_2))$.

Next we see that $\mathrm{PC}(X,Y)= \mathrm{LS}(X,Y)\cap \mathrm{Com}(X,Y)$. By definition of $\mathrm{PC}(X,Y)$ we have $\mathrm{LS}(X,Y)\cap\supseteq\mathrm{PC}(X,Y)\supseteq \mathrm{LS}(X,Y)\cap \mathrm{Com}(X,Y)$. Moreover as an obvious consequence of Lemma \ref{formulation}, we have $\mathrm{PC}(X,Y)\subseteq \mathrm{Com}(X,Y)$ and so $\mathrm{PC}(X,Y)\subseteq \mathrm{LS}(X,Y)\cap\mathrm{Com}(X,Y).$

The fact that $\mathrm{PC}(X,Y)=\mathrm{LSur}(X,Y)\cap \mathrm{Com}(X,Y)$ is now a consequence of Lemma \ref{intersection}.
\end{proof}

 \begin{defn} \label{phicon2} {\rm  Let $\varphi\in \mathrm{Hom}(X,Y)$.
 For $C'\in \mathcal{C}(Y)$, put
\[  \mathcal{C}(X) _{C'}=\{C\in  \mathcal{C}(X)\ :\  \varphi(C)\subseteq C' \},\qquad c (X) _{C'}=|\mathcal{C}(X) _{C'}|. \]}
\end{defn}

\begin{prop}\label{hom-con} Let $\varphi\in \mathrm{LSur}(X,Y)$.
 \begin{itemize}
\item[(i)]  If $C\in \mathcal{C}(X)$, then $\varphi(C)\in \mathcal{C}(Y)$. In particular, the image of $X$ is a union of components of $Y$.
\item[(ii)] For every $x\in V_X$, $\varphi(C_{X}(x))=C_{Y}(\varphi(x))\cong C_{X}(x)/\hspace{-1mm}\sim_{\varphi}.$
\item[(iii)] For every $C\in\mathcal{C}(X)$,  $\varphi^{-1}(\varphi(V_C))=\displaystyle{\bigcup_{\hat{C}\in  \mathcal{C}(X)_{\varphi(C)}}V_{\hat{C}}}$.

\end{itemize}
 \end{prop}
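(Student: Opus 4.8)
The plan is to prove the three parts in order; part~(i) carries essentially all the work, and parts~(ii) and~(iii) then fall out quickly. For~(i), I would start from Lemma~\ref{quasi-path}, which says $\varphi(C)$ is a connected subgraph of $Y$; hence there is a unique $C'\in\mathcal{C}(Y)$ with $\varphi(C)\subseteq C'$, and it suffices to prove $\varphi(C)=C'$. The crucial step is to show $V_{C'}=\varphi(V_C)$ by a path-lifting induction inside $C'$: given $y\in V_{C'}$, choose $x\in V_C$ and a path $y_0=\varphi(x),y_1,\dots,y_s=y$ in the connected graph $C'$, and lift it edge by edge. If $y_j=\varphi(x_j)$ with $x_j\in V_C$, then from $\{y_j,y_{j+1}\}\in E_Y$ and the local-surjectivity characterisation \eqref{hom-neigh-ls} we get $y_{j+1}\in N_Y(\varphi(x_j))\subseteq\varphi(N_X(x_j))$, so $y_{j+1}=\varphi(x_{j+1})$ for some $x_{j+1}\in N_X(x_j)$; being adjacent to $x_j\in V_C$, the vertex $x_{j+1}$ also lies in $V_C$, and the induction proceeds, giving $y=y_s\in\varphi(V_C)$. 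Once the vertex sets agree, the edge sets follow: $E_{\varphi(C)}\subseteq E_{C'}$ since a component is an induced subgraph, and a proper edge $\{y_1,y_2\}\in E^*_{C'}$ lifts by the same argument to an edge $\{x_1,x_2\}\in E_C$ (write $y_1=\varphi(x_1)$ with $x_1\in V_C$, then $y_2=\varphi(x_2)$ with $x_2\in N_X(x_1)\cap V_C$, and $x_1\neq x_2$ because $y_1\neq y_2$), while loops lie in the image trivially; hence $\varphi(C)=C'\in\mathcal{C}(Y)$. The ``in particular'' clause is then immediate, since $V_X$ and $E_X$ are the disjoint unions of the $V_C$ and the $E_C$ over $C\in\mathcal{C}(X)$, so $\varphi(X)$ has vertex set $\bigcup_C V_{\varphi(C)}$ and edge set $\bigcup_C E_{\varphi(C)}$.

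For~(ii), I would apply~(i) to $C=C_X(x)$: then $\varphi(C_X(x))$ is a component of $Y$ containing $\varphi(x)$, hence equals $C_Y(\varphi(x))$. To obtain the isomorphism I would restrict $\varphi$ to $C_X(x)$; this restriction is a complete homomorphism onto $\varphi(C_X(x))$, and the equivalence relation it induces on $V_{C_X(x)}$ is precisely the restriction of $\sim_{\varphi}$, so Lemma~\ref{h}(ii) yields the isomorphism $C_{X}(x)/\hspace{-1mm}\sim_{\varphi}\ \cong\ \varphi(C_X(x))=C_Y(\varphi(x))$.

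For~(iii), I would first note, using~(i), that $\varphi(\hat{C})$ is a component of $Y$ for every $\hat{C}\in\mathcal{C}(X)$ and that $\varphi(C)$ is a component of $Y$; thus, by Definition~\ref{phicon2}, $\hat{C}\in\mathcal{C}(X)_{\varphi(C)}$ precisely when $\varphi(\hat{C})=\varphi(C)$. The inclusion $\supseteq$ is then trivial: $z\in V_{\hat{C}}$ with $\varphi(\hat{C})\subseteq\varphi(C)$ gives $\varphi(z)\in V_{\varphi(C)}=\varphi(V_C)$, so $z\in\varphi^{-1}(\varphi(V_C))$. For $\subseteq$, if $\varphi(z)=\varphi(x)$ with $x\in V_C$, the component $\varphi(C_X(z))$ of $Y$ contains $\varphi(z)\in V_{\varphi(C)}$ and therefore equals $\varphi(C)$, so $C_X(z)\in\mathcal{C}(X)_{\varphi(C)}$ and $z\in V_{C_X(z)}$ lies in the right-hand union. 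The only genuinely delicate point in the whole argument is the path-lifting induction in~(i); everything else is bookkeeping with the definitions together with Lemmata~\ref{quasi-path} and~\ref{h}.
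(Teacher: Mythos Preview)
Your proposal is correct, and parts~(ii) and~(iii) match the paper's argument essentially verbatim. Part~(i), however, is organised differently. The paper first proves~(i) under the stronger hypothesis $\varphi\in\mathrm{PC}(X,Y)$, arguing by contradiction that $\varphi(C)$ is maximal connected (an edge $\{y,y'\}\in E_Y\setminus\varphi(E_C)$ with $y\in\varphi(V_C)$ would lift, via local strength and global surjectivity, to an edge in $E_C$). It then handles the general $\varphi\in\mathrm{LSur}(X,Y)$ by restricting to $\varphi_{\mid C}:C\to C'$ and invoking an external reference (\cite[Observation~2.4]{FI}) to conclude that this restriction is surjective onto the connected target $C'$, hence pseudo-covering, so the first case applies. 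Your path-lifting induction is a direct, self-contained replacement for both steps at once: you effectively re-prove the cited observation inline (local surjectivity plus connectedness of $C'$ forces $V_{C'}\subseteq\varphi(V_C)$) and then finish the edge comparison in the same spirit. What your route buys is independence from the external citation and avoidance of the PC/LSur case split; what the paper's route buys is a cleaner separation of concerns, isolating the ``surjectivity onto a connected target'' fact as a reusable black box.
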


 \begin{proof} (i) We first consider the case $\varphi\in \mathrm{PC}(X,Y)$.
 Let $C\in \mathcal{C}(X)$: we shall show that $\varphi(C)\in \mathcal{C}(Y).$   By Lemma \ref{quasi-path}, $\varphi(C)$ is a  connected subgraph and we need to see  that it is maximal connected. Assume the contrary. Then there exists an edge   $\{y,y'\}\in E_Y\setminus \varphi(E_C),$ with $y\in V_{\varphi(C)}=\varphi(V_C)$ and $y'\in V_Y$.   Let $x\in V_C$ with $y=\varphi(x).$  $\varphi$ being  surjective and locally strong, there also exists $x'\in V_X$ such that $\varphi(x')=y'$ and $\{x,x'\}\in E_X.$
Since $x\in V_C$, with $C$ a component, we then get $x'\in V_C$ and  so $e=\{x,x'\}\in E_C.$ Thus $\varphi(e)= \{\varphi(x),\varphi(x')\}\in \varphi(E_C),$ that is, $\{y,y'\}\in \varphi(E_C),$ a contradiction.

We now  consider the case $\varphi\in \mathrm{LSur}(X,Y)$. Let  $C\in \mathcal{C}(X)$ and $C'\in \mathcal{C}(Y)$ be the unique component of $Y$ containing $\varphi(C).$ Then it is easily checked that $\varphi_{\mid C}\in \mathrm{LSur}(C,C')$. By \cite[Observation 2.4]{FI},  $C'$ being connected, we also have that $\varphi_{\mid C}\in \mathrm{Sur}(C,C')$ and thus $\varphi_{\mid C}\in \mathrm{PC}(C,C')$. Since the result has been proved for pseudo-covering homomorphisms and $C$ is connected, we deduce that $\varphi(C)=C'$.

 (ii) Let $x\in V_X$. By (i), $\varphi(C_{X}(x))$ is a component of $Y$ that contains the vertex $\varphi(x)$ and thus $\varphi(C_{X}(x))=C_{Y}(\varphi(x)).$
Next observe that $\varphi$ restricted to the subgraph $C_{X}(x)$ defines a complete homomorphism onto $C_{Y}(\varphi(x)) $ and apply Lemma \ref{h}.

(iii) The fact that if $\hat{C}\in \mathcal{C}(X)_{\varphi(C)}$ then $V_{\hat{C}}\subseteq \varphi^{-1}(\varphi(V_C))$ is obvious.
Let $x\in \varphi^{-1}(\varphi(V_C))$, for some $C\in\mathcal{C}(X)$. To conclude it is enough to show that  $\varphi(C_{X}(x))=\varphi(C).$ From $\varphi(x)\in
\varphi(V_C)$, it follows that there exists $\overline{x}\in V_C$ with $\varphi(x)=\varphi(\overline{x})$. Thus, by (ii), we get \[\varphi(C_{X}(x))=C_{Y}(\varphi(x))=C_{Y}(\varphi(\overline{x}))=\varphi(C_{X}(\overline{x}))=\varphi(C).\]
 \end{proof}

As an interesting consequence, we have a comparison  between the isolated vertices of $X$ and those in $Y$ and a general link between the components of $X$ and $Y$ in the tame case.

\begin{cor}\label{isolated} Let $\varphi\in \mathrm{LSur}(X,Y)$ and $x\in V.$ If $x$ is isolated in $X$, then $\varphi(x)$ is isolated in $Y$.
  \end{cor}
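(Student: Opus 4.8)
The plan is to derive this directly from the local surjectivity of $\varphi$ at the single vertex $x$, with Proposition \ref{hom-con} available as an alternative route. First I would unwind what ``$x$ isolated in $X$'' means in the reflexive setting: the component $C_X(x)$ has vertex set $\{x\}$ and carries only the loop $\{x\}$, so $N_X(x)=\{x\}$ and hence $\varphi(N_X(x))=\{\varphi(x)\}$.

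Then I would invoke \eqref{hom-neigh-ls}: since $\varphi\in\mathrm{LSur}(X,Y)$, we have $N_Y(\varphi(x))\subseteq\varphi(N_X(x))=\{\varphi(x)\}$. Because $Y$ is reflexive, $\varphi(x)\in N_Y(\varphi(x))$, and therefore $N_Y(\varphi(x))=\{\varphi(x)\}$. In particular $\varphi(x)$ has no proper neighbour in $Y$, so $C_Y(\varphi(x))$ consists of the single vertex $\varphi(x)$; that is, $\varphi(x)$ is isolated.

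Alternatively --- and this is the way the statement is advertised in the text, namely as a consequence of Proposition \ref{hom-con} --- one argues through components: if $x$ is isolated then $V_{C_X(x)}=\{x\}$, so by Proposition \ref{hom-con}(ii) the component $C_Y(\varphi(x))=\varphi(C_X(x))$ has vertex set $\varphi(\{x\})=\{\varphi(x)\}$, which is precisely the assertion that $\varphi(x)$ is an isolated vertex of $Y$.

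There is essentially no obstacle here. The only subtlety worth flagging is the bookkeeping forced by reflexivity: ``isolated'' must be read as ``the component is a single vertex bearing just its loop'', so that $N_X(x)=\{x\}$ rather than $N_X(x)=\varnothing$; with that convention both arguments go through verbatim. Note also that neither uses surjectivity of $\varphi$, only local surjectivity, consistent with the fact that Corollary \ref{isolated} is stated for $\varphi\in\mathrm{LSur}(X,Y)$.
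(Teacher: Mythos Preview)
Your proof is correct; the paper's own argument is precisely your second route, invoking Proposition \ref{hom-con}(ii) to pass from $V_{C_X(x)}=\{x\}$ to $V_{C_Y(\varphi(x))}=\{\varphi(x)\}$. Your first, direct neighbourhood argument via \eqref{hom-neigh-ls} is a valid and slightly more elementary alternative that bypasses the component machinery entirely.
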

 \begin{proof}  If $V_{C_{X}(x)}=\{x\}$ then, by Proposition \ref{hom-con} (ii), $V_{C_{Y}(\varphi(x))}=\{\varphi(x)\}.$
 \end{proof}

 \begin{cor}\label{tame-component} Let $\varphi\in \mathrm{PC}(X,Y)\cap  \mathrm{T}(X,Y).$ Then $\varphi$ induces a bijection between $\mathcal{C}(X)$ and $\mathcal{C}(Y)$. Given $C'\in \mathcal{C}(Y)$, if $C$ is the unique component  of $X$ such that $\varphi(C)=C'$, then $V_C=\varphi^{-1}(V_{C'}).$
  \end{cor}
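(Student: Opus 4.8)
The plan is to combine the ``natural migration of components'' supplied by Proposition \ref{hom-con} with the defining property of tameness. Since $\varphi\in\mathrm{PC}(X,Y)=\mathrm{LSur}(X,Y)\cap\mathrm{Com}(X,Y)$ by Proposition \ref{propc}\,(ii), Proposition \ref{hom-con} applies to $\varphi$. First I would record that, by Proposition \ref{hom-con}\,(i), the assignment $f\colon\mathcal{C}(X)\to\mathcal{C}(Y)$, $f(C)=\varphi(C)$, is a well-defined map. It is surjective: given $y\in V_Y$, surjectivity of $\varphi$ yields $x\in V_X$ with $\varphi(x)=y$, and Proposition \ref{hom-con}\,(ii) gives $f(C_X(x))=\varphi(C_X(x))=C_Y(\varphi(x))=C_Y(y)$.

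Next I would prove $f$ is injective using tameness. Suppose $f(C_1)=f(C_2)=C'$ for $C_1,C_2\in\mathcal{C}(X)$, and pick $x_1\in V_{C_1}$. Then $\varphi(x_1)\in V_{C'}=\varphi(V_{C_2})$, so there is $x_2\in V_{C_2}$ with $\varphi(x_2)=\varphi(x_1)$, i.e.\ $x_1\sim_{\varphi}x_2$. Since $\varphi$ is tame, $C_X(x_1)=C_X(x_2)$, hence $C_1=C_2$. Thus $f$ is a bijection between $\mathcal{C}(X)$ and $\mathcal{C}(Y)$.

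For the second assertion, fix $C'\in\mathcal{C}(Y)$ and let $C$ be the unique component of $X$ with $\varphi(C)=C'$; in particular $\varphi(V_C)=V_{C'}$. Because $\varphi(\hat C)\in\mathcal{C}(Y)$ for every $\hat C\in\mathcal{C}(X)$ (Proposition \ref{hom-con}\,(i)), the condition $\varphi(\hat C)\subseteq C'$ is equivalent to $\varphi(\hat C)=C'$, i.e.\ to $f(\hat C)=f(C)$; by injectivity of $f$ this forces $\hat C=C$, so $\mathcal{C}(X)_{C'}=\mathcal{C}(X)_{\varphi(C)}=\{C\}$. Applying Proposition \ref{hom-con}\,(iii) to $C$ then gives
\[
\varphi^{-1}(V_{C'})=\varphi^{-1}(\varphi(V_C))=\bigcup_{\hat C\in\mathcal{C}(X)_{\varphi(C)}}V_{\hat C}=V_C,
\]
which is the claim.

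I do not expect a real obstacle here: the result is essentially a corollary of Proposition \ref{hom-con} once tameness is invoked. The only point needing slight care is the bookkeeping showing that $\mathcal{C}(X)_{C'}$ collapses to a single component, which relies precisely on $\varphi$ sending components \emph{onto} components, so that the inclusion $\varphi(\hat C)\subseteq C'$ can be upgraded to an equality.
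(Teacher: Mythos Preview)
Your argument is correct, and the overall architecture (define $C\mapsto\varphi(C)$ via Proposition~\ref{hom-con}, show it is a bijection, then deduce the fibre description) matches the paper's. The execution differs in two places, however. For injectivity, you argue directly from the definition of tameness: equal images force two vertices with the same $\varphi$-value, and Definition~\ref{sim} puts them in the same component. The paper instead invokes the earlier counting result Proposition~\ref{quotient-graph}\,(ii) together with Lemma~\ref{h}\,(ii) to conclude $c(X)=c(Y)$, so that the already established surjection must be a bijection. Your route is more self-contained and avoids those citations; the paper's route cashes in on machinery it has already built. For the second claim, you appeal to Proposition~\ref{hom-con}\,(iii) and the collapse $\mathcal{C}(X)_{\varphi(C)}=\{C\}$, whereas the paper argues by a direct element chase (take $x_1\in\varphi^{-1}(V_{C'})$, compare with some $x_2\in V_C$, and use injectivity of $\varphi_{\mathcal{C}}$). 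Both are clean; your use of (iii) is perhaps the more structural of the two.
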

 \begin{proof} By Proposition \ref{hom-con}, we can define the map $\varphi_{\mathcal{C}}:\mathcal{C}(X)\rightarrow \mathcal{C}(Y)$ by $\varphi_{\mathcal{C}}(C)=\varphi(C)$ for all $C\in \mathcal{C}(X)$.  $\varphi$ being surjective, $\varphi_{\mathcal{C}}$ is surjective too. Since $\sim_{\varphi}$ is tame, Proposition \ref{quotient-graph}, gives $c(X)=c(X/\hspace{-1mm}\sim)$. On the other hand, $\varphi$  being complete, Lemma \ref{h}, guarantees that $Y\cong X/\hspace{-1mm}\sim$ and thus $c(Y)=c(X/\hspace{-1mm}\sim)$, so that $c(X)=c(Y).$ It follows that $\varphi_{\mathcal{C}}$ is injective.

 Let next $C'\in \mathcal{C}(Y)$ and $C\in \mathcal{C}(X)$ be the unique component such that $\varphi(C)=C'$. Surely we have $V_C\subseteq \varphi^{-1}(V_{C'}).$ To get the other inclusion let $x_1\in \varphi^{-1}(V_{C'})$ and choose $x_2\in V_C$. Since both $\varphi(x_1)$ and $\varphi(x_2)$ belong to $V_{C'},$ we have $C'=C_Y(\varphi(x_1))=C_Y(\varphi(x_2))$. Hence, by Proposition \ref{hom-con}, we have  $\varphi(C_X(x_1))=\varphi(C_X(x_2))$, that is, $\varphi_{\mathcal{C}}(C_X(x_1))=\varphi_{\mathcal{C}}(C_X(x_2)).$ Since $\varphi_{\mathcal{C}}$ is a bijection, we then get $C_X(x_1)=C_X(x_2)=C$ so that  $x_1\in V_C.$
\end{proof}

 \begin{defn}\label{quot-pse}{\rm Let $X$ be a graph and $\sim$ an equivalence relation on $V_X$.  We say that the quotient graph $X/\hspace{-1mm}\sim$ is {\it pseudo-covered}  by $X$ (is an {\it orbit quotient} of $X$), with respect to $\sim,$ if the projection $\pi:X \rightarrow X/\hspace{-1mm}\sim$ is pseudo-covering (is an orbit homomorphism). }
 \end{defn}
 Note that $X/\hspace{-1mm}\sim$ is pseudo-covered if and only if, for each $x_1,x_2\in V_X$ such that $\{[x_1],[x_2]\}\in [E_X]$ there exists $\tilde{x}_2\in V_X$ with $\{x_1,\tilde{x}_2\}\in E_X$ and $[\tilde{x}_2] =[x_2]$. We establish next a useful criterium of connectedness for $X$ relying on that of $X/\hspace{-1mm}\sim$.

\begin{cor}\label{connection}  Assume that $X/\hspace{-1mm}\sim$  is connected and pseudo-covered. If there exists $[x]\in [V_X]$ such that $[x]\subseteq V_{C_{X}(x)}$, then $X$ is connected.
  \end{cor}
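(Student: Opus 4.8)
The plan is to leverage Proposition \ref{hom-con}(i): for a locally surjective map each component of $X$ maps \emph{onto} a full component of the target, and since the target $X/\hspace{-1mm}\sim$ is connected, this means every component of $X$ maps onto the whole quotient. The distinguished cell $[x]$ then serves as a common point witnessing that all components of $X$ coincide.

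First I would record that, by hypothesis, the projection $\pi:X\rightarrow X/\hspace{-1mm}\sim$ is pseudo-covering, hence $\pi\in\mathrm{LSur}(X,X/\hspace{-1mm}\sim)$ by Proposition \ref{propc}(ii) (equivalently, by Lemma \ref{intersection}(ii), since $\pi$ is surjective). As $X/\hspace{-1mm}\sim$ is connected, its unique component is $X/\hspace{-1mm}\sim$ itself, with vertex set $[V_X]$. Also note that the condition $[x]\subseteq V_{C_X(x)}$ does not depend on the chosen representative of the cell, since any two vertices of $[x]$ then lie in the same component.

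Next, let $C\in\mathcal{C}(X)$ be arbitrary. Proposition \ref{hom-con}(i) gives $\pi(C)\in\mathcal{C}(X/\hspace{-1mm}\sim)$, so $\pi(C)=X/\hspace{-1mm}\sim$ and in particular $\pi(V_C)=[V_X]$. Hence the cell $[x]$ from the hypothesis belongs to $\pi(V_C)$, so there is $x'\in V_C$ with $\pi(x')=[x]$, that is $x'\in[x]$. Since $[x]\subseteq V_{C_X(x)}$ by assumption, we get $x'\in V_C\cap V_{C_X(x)}$; because the vertex sets of the components of $X$ partition $V_X$, distinct components are vertex-disjoint, and this forces $C=C_X(x)$. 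As $C$ was arbitrary, $\mathcal{C}(X)=\{C_X(x)\}$, i.e.\ $X$ is connected.

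I do not expect a genuine obstacle here: the whole argument is essentially an immediate consequence of Proposition \ref{hom-con}(i). The only points meriting a word of care are the inclusion $\mathrm{PC}(X,Y)\subseteq\mathrm{LSur}(X,Y)$ needed to invoke that proposition, and the trivial remark that every component of $X$ must hit the cell $[x]$ once its image is forced to be all of $X/\hspace{-1mm}\sim$.
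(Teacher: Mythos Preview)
Your proof is correct and follows essentially the same approach as the paper: apply Proposition \ref{hom-con}(i) to $\pi$ (which is locally surjective since it is pseudo-covering) so that every component of $X$ surjects onto the connected quotient, hence meets the cell $[x]\subseteq V_{C_X(x)}$, forcing all components to equal $C_X(x)$. Your justification for $\pi\in\mathrm{LSur}$ via Proposition \ref{propc}(ii) is in fact slightly more explicit than the paper's, which simply invokes Lemma \ref{nat-comp} before applying Proposition \ref{hom-con}.
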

  \begin{proof}  By Lemma \ref{nat-comp}, we can apply Proposition \ref{hom-con} to $\pi:X \rightarrow X/\hspace{-1mm}\sim$ obtaining that, for each $C\in \mathcal{C}(X),$  $\pi(C)=X/\hspace{-1mm}\sim.$  In particular $\pi(V_C)=[V_X]$ and thus each component contains at least one vertex in  each equivalence class with respect to $\sim.$ Since $[x]\subseteq V_{C_{X}(x)}$,  we therefore have a common vertex for $C$ and $C_{X}(x)$. Thus $C_{X}(x)=C$ is the only component in $X$.
   \end{proof}

 \vskip 0.6 true cm
 \section{\bf Counting the components }\label{new}
\vskip 0.4 true cm

Our goal is to count components of a graph $X$ by counting those of a less complex homomorphic image $Y$. We begin with a rough link between the two.

\begin{defn} \label{phicon} {\rm  Let  $\varphi\in \mathrm{Hom}(X,Y)$.
We denote the set of components of $X$, admissible for a fixed $y\in V_Y,$ by
\[ \mathcal{C}(X)_{y}=\{C\in\mathcal{C}(X)\ :\  k_C(y)>0\}\]
and its size by $c(X)_{y}.$
}
\end{defn}
Observe that no ambiguity arises between the definition above and Definition \ref{phicon2}, because the indices are taken in different sets.
\begin{lem}\label{rough} Let  $\varphi\in \mathrm{Hom}(X,Y)$, and let $\mathcal{C}(Y)=\{C_i': i\in\{1,\dots,c(Y)\}\}.$ Then \begin{equation}\label{rough-for}
c(X)=\sum_{i=1}^{c(Y)}c(X)_{C'_i}.
\end{equation}
\end{lem}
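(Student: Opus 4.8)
The plan is to show that the fibres $\mathcal{C}(X)_{C_1'},\dots,\mathcal{C}(X)_{C'_{c(Y)}}$ form a partition of $\mathcal{C}(X)$; then summing the sizes $c(X)_{C'_i}$ yields $c(X)=|\mathcal{C}(X)|$, which is exactly \eqref{rough-for}. The key observation driving everything is the one already recorded in the text just before Proposition \ref{main-component}: for any $\varphi\in\mathrm{Hom}(X,Y)$ and any $C\in\mathcal{C}(X)$, the image $\varphi(C)$ is a connected subgraph of $Y$ (Lemma \ref{quasi-path}), hence is contained in a \emph{unique} component of $Y$. This uniqueness is what makes $\mathcal{C}(X)_{\bullet}$ behave like the fibres of a function from $\mathcal{C}(X)$ to $\mathcal{C}(Y)$.

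First I would make this precise: define a map $g:\mathcal{C}(X)\to\mathcal{C}(Y)$ by sending $C$ to the unique component $C'\in\mathcal{C}(Y)$ with $\varphi(V_C)\subseteq V_{C'}$. To see $g$ is well defined, pick $x\in V_C$; since $C$ is connected, $\varphi(C)$ is connected by Lemma \ref{quasi-path}, so every vertex of $\varphi(V_C)$ lies in $C_Y(\varphi(x))$, and conversely if $\varphi(V_C)\subseteq V_{C'}$ then $C'$ must contain $\varphi(x)$, forcing $C'=C_Y(\varphi(x))$. Thus $g(C)=C_Y(\varphi(x))$ for any choice of $x\in V_C$, and in particular $\varphi(C)\subseteq C'$ holds for exactly one component $C'$. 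Consequently, for each $i$, $\mathcal{C}(X)_{C'_i}=g^{-1}(C'_i)$: indeed $C\in\mathcal{C}(X)_{C'_i}$ means $\varphi(C)\subseteq C'_i$ by Definition \ref{phicon2}, which by the uniqueness just established is equivalent to $g(C)=C'_i$.

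Then I would invoke the elementary fact that the fibres of any map $g:\mathcal{C}(X)\to\mathcal{C}(Y)$ partition its domain: the sets $g^{-1}(C'_1),\dots,g^{-1}(C'_{c(Y)})$ are pairwise disjoint (distinct $C'_i$ are distinct values) and their union is all of $\mathcal{C}(X)$ (every $C$ maps somewhere, since $\mathcal{C}(Y)=\{C'_1,\dots,C'_{c(Y)}\}$ enumerates \emph{all} components of $Y$). Taking cardinalities gives
\begin{equation*}
c(X)=|\mathcal{C}(X)|=\sum_{i=1}^{c(Y)}|g^{-1}(C'_i)|=\sum_{i=1}^{c(Y)}|\mathcal{C}(X)_{C'_i}|=\sum_{i=1}^{c(Y)}c(X)_{C'_i},
\end{equation*}
which is \eqref{rough-for}.

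There is essentially no hard part here — the statement is a bookkeeping identity — but the one point deserving care is the identification $\mathcal{C}(X)_{y}$-style indexing versus $\mathcal{C}(X)_{C'}$-style indexing: the lemma uses the component-indexed sets of Definition \ref{phicon2}, not the vertex-indexed sets of Definition \ref{phicon}, and one should note (as the excerpt already does) that no confusion arises since the subscripts range over different sets. The only genuine input beyond set theory is Lemma \ref{quasi-path}, used to guarantee that $\varphi(C)$ lands in a single component of $Y$; everything else is the partition-into-fibres principle.
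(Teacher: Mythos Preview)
Your proof is correct and follows essentially the same approach as the paper: you define a map from $\mathcal{C}(X)$ to $\mathcal{C}(Y)$ sending each $C$ to the unique component of $Y$ containing $\varphi(C)$, identify each $\mathcal{C}(X)_{C'_i}$ as a fibre of this map, and sum the fibre sizes. The paper's version is terser but the content is identical.
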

\begin{proof} Define the  map
 $\varphi_{ \mathcal{C}(X)}: \mathcal{C}(X)\rightarrow \mathcal{C}(Y)$ by $  \varphi_{ \mathcal{C}(X)}(C)=C'$ for all $C\in \mathcal{C}(X),$
 where $C'$ is the unique component of $Y$ with $\varphi(C)\subseteq C'$.
Then $\mathcal{C}(X) _{C_i'}=\varphi_{ \mathcal{C}(X)}^{-1}(C_i')$, for $i\in \{1,\dots,c(Y)\}$. Thus   $ \mathcal{C}(X)=\bigcup_{i=1}^{c(Y)}\mathcal{C}(X) _{C_i'}$ and, since the union is disjoint, we get the desired equality.
  \end{proof}

  \subsection{\bf Counting the components for locally surjective homomorphisms }\label{comp-Lsur}
Formula \eqref{rough-for} is generally of little help in computing $c(X)$ from $c(Y)$ since the numbers $c(X)_{C'_i}$ are hard to determine.
If $\varphi$ is locally surjective,
by Proposition \ref{hom-con}, we have $\mathcal{C}(X) _{C'}=\{C\in  \mathcal{C}(X)\ :\  \varphi(C)=C' \}$ and we can write a more expressive formula.

\begin{lem}\label{second} Let $\varphi\in \mathrm{LSur}(X,Y)$.
 \begin{itemize} \item[(i)]  For each $y\in V_Y,\
\mathcal{C}(X) _{C_{Y}(y)}=\mathcal{C}(X)_{y}.$
In particular $c(X) _{C_{Y}(y)}=c(X)_{y}.$
 \item[(ii)]  If $y, \overline{y}\in V_{C'}$, for some $C'\in \mathcal{C}(Y)$, then $c(X)_{y}=c(X)_{\overline{y}}.$
 \item[(iii)] For $1\leq i\leq c(Y)$, let $y_i\in V_Y$ be such that $\mathcal{C}(Y)=\{C_Y(y_i): 1\leq i\leq c(Y)\}$. Then
 \begin{equation}\label{genfor}
 c(X)=\sum_{i=1}^{c(Y)}c(X)_{y_i}.
 \end{equation}
 \end{itemize}
\end{lem}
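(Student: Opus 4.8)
The plan is to deduce all three parts from Proposition \ref{hom-con}(i), whose content is that for a locally surjective $\varphi$ the image $\varphi(C)$ of any component $C\in\mathcal{C}(X)$ is itself a component of $Y$. The point is that this upgrades the inclusion $\varphi(C)\subseteq C'$ appearing in Definition \ref{phicon2} to an \emph{equality} $\varphi(C)=C'$, since two components of $Y$ that are contained in one another must coincide.

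For (i), fix $y\in V_Y$ and unwind the definitions. On the one hand, $C\in\mathcal{C}(X)_y$ means $k_C(y)>0$, i.e. $V_C\cap\varphi^{-1}(y)\neq\varnothing$, i.e. $y\in\varphi(V_C)=V_{\varphi(C)}$. On the other hand, $C\in\mathcal{C}(X)_{C_Y(y)}$ means $\varphi(C)\subseteq C_Y(y)$. I would show these conditions are equivalent: if $y\in V_{\varphi(C)}$, then $\varphi(C)$ is (by Proposition \ref{hom-con}(i)) a component of $Y$ containing $y$, hence $\varphi(C)=C_Y(y)$ and in particular $\varphi(C)\subseteq C_Y(y)$; conversely, if $\varphi(C)\subseteq C_Y(y)$, then since both $\varphi(C)$ and $C_Y(y)$ are components we get $\varphi(C)=C_Y(y)$, so $y\in V_{C_Y(y)}=V_{\varphi(C)}$. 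This proves $\mathcal{C}(X)_{C_Y(y)}=\mathcal{C}(X)_y$, and taking cardinalities gives $c(X)_{C_Y(y)}=c(X)_y$. For (ii), if $y,\overline{y}\in V_{C'}$ with $C'\in\mathcal{C}(Y)$, then $C'=C_Y(y)=C_Y(\overline{y})$, so applying (i) twice yields $\mathcal{C}(X)_y=\mathcal{C}(X)_{C'}=\mathcal{C}(X)_{\overline{y}}$, whence $c(X)_y=c(X)_{\overline{y}}$. For (iii), I would simply specialize Lemma \ref{rough}: in \eqref{rough-for} choose the representatives $C'_i=C_Y(y_i)$, and replace each $c(X)_{C'_i}$ by $c(X)_{y_i}$ using part (i), obtaining \eqref{genfor}.

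There is no serious obstacle here once Proposition \ref{hom-con}(i) is in hand; the only step requiring a moment's care is the passage from $\varphi(C)\subseteq C_Y(y)$ to $\varphi(C)=C_Y(y)$, which rests precisely on $\varphi(C)$ being a full component of $Y$ rather than merely a connected subgraph — and this is exactly what local surjectivity provides.
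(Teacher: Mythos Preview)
Your proof is correct and follows essentially the same route as the paper: both arguments reduce everything to Proposition~\ref{hom-con} (the paper invokes part~(ii) in the form $\varphi(C_X(x))=C_Y(\varphi(x))$, while you use part~(i) together with the observation that two nested components coincide) and then deduce (ii)--(iii) from (i) and Lemma~\ref{rough}.
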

\begin{proof}  (i) Let $C\in \mathcal{C}(X) _{C_{Y}(y)}$. Thus, as $\varphi\in \mathrm{LSur}(X,Y)$, $\varphi(C)=C_{Y}(y)$ so that, in particular, there exists $x\in V_C$ with $\varphi(x)=y$ and so $k_C(y)>0$. Conversely if $C\in\mathcal{C}(X)$ and $ k_C(y)>0,$ then there exists  $x\in V_C$ with $\varphi(x)=y$. Thus $C=C_{X}(x)$ and, by Proposition \ref{hom-con}, $\varphi(C)=\varphi(C_{X}(x))=C_{Y}(\varphi(x))=C_{Y}(y)$.

(ii)-(iii) They follow immediately as an application of (i) and of Lemma \ref{rough}.
\end{proof}
Note that the integers $c(X)_{y_i}$ in \eqref{genfor} are non-negative, and that $c(X)_{y_i}=0$ if and only if the component $C_i'$ is not included in the image of $X$ by $\varphi$.

\begin{proof}[Proof of Theorem A]
Combine Proposition \ref{hom-con}  and Lemma \ref{second} (iii).
\end{proof}
\subsection {\bf The component equitable homomorphisms }\label{equi}

While Formula \ref{genfor} improves Formula \ref{rough-for} allowing to pass from $C_i'$ to one of its vertices $y_i, $ the computation of  $c(X)_{C'_i}$ often remains challenging. Fortunately, in many applications,   we have the following property: every component of $X$ admissible for $y\in V_Y$ intersects the fibre $\varphi^{-1}(y)$ in sets of the same size.
\begin{defn}\label{comp-eq}
 {\rm $\varphi\in \mathrm{Hom}(X,Y)$ is called {\it component equitable} if for every $y\in V_Y$ and every $C,\hat{C}\in \mathcal{C}(X)_{y}$, we have  $k_{C}(y)=k_{\hat{C}}(y)$.
  We denote the set of the component equitable homomorphisms from $X$ to $Y$  by $\mathrm{CE}(X,Y)$. }
\end{defn}

\noindent We exhibit examples showing that generally, among the classes $\mathrm{CE}(X,Y), \mathrm{E}(X,Y), \mathrm{PC}(X,Y)$, no further inclusion apart from  $\mathrm{E}(X,Y)\cap \mathrm{Com}(X,Y)\subseteq \mathrm{PC}(X,Y)$, proved in \eqref{sub2}, holds. In all of the following examples let $Y$ be defined by $V_Y=\{y,z\}$, $E^*_Y=\{y,z\}$.

\begin{ex}\label{PC+CE-E} {\rm Let $X$ be defined by $$V_X=\{1,2,3,4,5,6,7,8\},\quad E^*_X=\{\{1,2\},\{1,5\},\{1,6\},\{2,6\},\{3,4\},\{3,7\},\{4,8\}\}$$ and consider $\varphi:V_X\rightarrow V_Y$ given by $\varphi(x)=y$ for all $1\leq x\leq 4$, $\varphi(x)=z$ for all $5\leq x\leq 8.$
Then $\varphi\in (\mathrm{PC}(X,Y)\cap \mathrm{CE}(X,Y))\setminus \mathrm{E}(X,Y)$.}
\end{ex}

\begin{ex}\label{PC-E-CE} {\rm  Let $X$ be defined by $$V_X=\{1,2,3,4,5,6\},\quad E^*_X=\{\{1,2\}, \{1,4\},\{2,5\}, \{3,6\}\}$$ and consider $\varphi:V_X\rightarrow V_Y$ given by $\varphi(x)=y$ for all $1\leq x\leq 3$, $\varphi(x)=z$ for all $4\leq x\leq 6.$
Then $\varphi\in \mathrm{PC}(X,Y)\setminus\left( \mathrm{CE}(X,Y)\cup\mathrm{E}(X,Y)\right)$.}
\end{ex}

\begin{ex}\label{E+Com-CE}{\rm   Let $X$ be defined by $V_X=\{x\in\mathbb{N}: 1\leq x\leq 14\}$ and
$$
 \begin{array}{l} E^*_X=\{\{1,2\}, \{1,3\},\{1,8\}, \{2,3\},\{2,9\},\{3,10\},\{4,5\},\{4,7\},\{4,11\},\{5,6\},\{5,12\},\\
\{6,7\},\{6,13\},\{7,14\},\{8,9\},\{8,10\},\{9,10\},\{11,12\},\{11,14\},\{12,13\}, \{13,14\}\}.
\end{array}
$$
Consider $\varphi:V_X\rightarrow V_Y$ given by $\varphi(x)=y$ for all $1\leq x\leq 7$, $\varphi(x)=z$ for all $8\leq x\leq 14.$
Then $\varphi\in (\mathrm{E}(X,Y)\cap\mathrm{Com}(X,Y) )\setminus \mathrm{CE}(X,Y)$.}
\end{ex}

\begin{prop}\label{union2} Let $\varphi\in \mathrm{LSur}(X,Y)\cap \mathrm{CE}(X,Y)$ and $y\in V_Y$. Then
$c(X)_{y}=\frac{k_{X}(y)}{k_{C}(y)}$ for all $C\in \mathcal{C}(X)_{y}.$ In particular $k_{C}(y)$ divides  $k_{X}(y).$
\end{prop}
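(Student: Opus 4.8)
The plan is to show that the fibre $\varphi^{-1}(y)$ is partitioned by the components in $\mathcal{C}(X)_y$, and then use component equitability to count. First I would note that $k_X(y) = |\varphi^{-1}(y)|$ and that $\varphi^{-1}(y) \subseteq V_X = \bigcup_{C \in \mathcal{C}(X)} V_C$, where the right-hand union is disjoint since the vertex sets of the components of $X$ form a partition of $V_X$. Intersecting with $\varphi^{-1}(y)$, we get
\[
\varphi^{-1}(y) = \bigcup_{C \in \mathcal{C}(X)} \left( V_C \cap \varphi^{-1}(y) \right),
\]
a disjoint union, so $k_X(y) = \sum_{C \in \mathcal{C}(X)} k_C(y)$. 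The terms with $k_C(y) = 0$ contribute nothing, so the sum runs effectively over $C \in \mathcal{C}(X)_y$, giving $k_X(y) = \sum_{C \in \mathcal{C}(X)_y} k_C(y)$.

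Next I would invoke the hypothesis $\varphi \in \mathrm{CE}(X,Y)$: all the summands $k_C(y)$ with $C \in \mathcal{C}(X)_y$ are equal to a common value, say $k_C(y)$ for any fixed $C \in \mathcal{C}(X)_y$. (This requires $\mathcal{C}(X)_y \neq \varnothing$, which I should note holds because $\varphi$ is surjective, hence $y \in \varphi(V_X)$, so some component is admissible for $y$; alternatively one can observe that if $\mathcal{C}(X)_y = \varnothing$ then $k_X(y) = 0$ and the claimed formula is vacuous. Since local surjectivity plus surjectivity is in force, the clean statement is that $\mathcal{C}(X)_y \neq \varnothing$.) Therefore the sum has $c(X)_y = |\mathcal{C}(X)_y|$ equal terms each equal to $k_C(y)$, whence $k_X(y) = c(X)_y \cdot k_C(y)$. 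Dividing by the nonzero integer $k_C(y)$ yields $c(X)_y = k_X(y)/k_C(y)$, and this identity forces $k_C(y) \mid k_X(y)$.

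I do not expect any serious obstacle here; the statement is essentially a counting argument combining the partition of $V_X$ by components with the defining equal-multiplicity property. The only point demanding a word of care is the nonemptiness of $\mathcal{C}(X)_y$, which is where the locally surjective hypothesis (or just surjectivity) enters — without it the quotient $k_X(y)/k_C(y)$ would be ill-defined for $y \notin \varphi(V_X)$. It is worth remarking that this proof uses $\varphi \in \mathrm{LSur}(X,Y)$ only through surjectivity; the local surjectivity is the relevant hypothesis for the companion results (e.g. Lemma \ref{second}) that combine with this proposition to produce Formula \eqref{formula} with the simplified summands, and in the orbit case it is automatic by Proposition \ref{union2cons}, which is what ultimately delivers Theorem B.
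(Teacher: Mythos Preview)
Your argument is essentially identical to the paper's: partition the fibre $\varphi^{-1}(y)$ across the admissible components, use component equitability to make the summands equal, and read off $k_X(y)=c(X)_y\,k_C(y)$. One small correction: you assert that ``local surjectivity plus surjectivity is in force,'' but the hypothesis is only $\varphi\in\mathrm{LSur}(X,Y)\cap\mathrm{CE}(X,Y)$, and the paper explicitly notes that local surjectivity does not imply surjectivity; neither the paper's proof nor yours actually needs $\mathcal{C}(X)_y\neq\varnothing$, since the conclusion is quantified over $C\in\mathcal{C}(X)_y$ and is vacuous when that set is empty (as you yourself observe).
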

\begin{proof}
$c(X)_{y}$ is the number of components  of $X$ admissible for $y$ and, since $\varphi\in \mathrm{CE}(X,Y)$, each of those components admits the same number of vertices mapped by $\varphi$ into $y$. Thus, for each $C \in \mathcal{C}(X)_{y}$, we have $c(X)_{y} k_{C}(y)=k_{X}(y)$, where the factors are positive integers.
\end{proof}

\subsection{\bf Counting the components for orbit homomorphisms }\label{automorphic}

\begin{prop}\label{union2cons} Let $\varphi\in \mathrm{O}(X,Y)$ be $\mathfrak{G}$-consistent and let $y\in V_Y$.

\begin{itemize}
\item[(i)] For each $C\in \mathcal{C}(X)_{y},\  \mathcal{C}(X)_{y}=\{f(C)\ : f\in \mathfrak{G}\}.$ In particular, the components  of $X$ admissible for $y$ are isomorphic through a graph automorphism of $X$.
\item[(ii)] $\mathrm{O}(X,Y)\subseteq \mathrm{CE}(X,Y).$
\item[(iii)] If $\varphi\in \mathrm{O}(X,Y)\cap  \mathrm{Com}(X,Y)$, then $c(X)_{y}=\frac{|\varphi^{-1}(\varphi(V_C))|}{|V_C|}$ for all $C\in \mathcal{C}(X)_{y}.$
\end{itemize}
\end{prop}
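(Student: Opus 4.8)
The plan is to prove the three parts in order, since each feeds into the next. For part (i), fix $C\in\mathcal{C}(X)_y$ and a vertex $x\in V_C$ with $\varphi(x)=y$. Given any $\hat{C}\in\mathcal{C}(X)_y$, pick $\hat{x}\in V_{\hat{C}}$ with $\varphi(\hat{x})=y$; then $\varphi(x)=\varphi(\hat{x})$, so by Lemma \ref{phi-con}(b) there is $f\in\mathfrak{G}$ with $\hat{x}=f(x)$. Since $f\in\mathrm{Aut}(X)$, it maps $C_X(x)=C$ to the component $C_X(f(x))=C_X(\hat{x})=\hat{C}$, giving $\hat{C}=f(C)$. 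Conversely, for any $f\in\mathfrak{G}$ and any $C\in\mathcal{C}(X)_y$ with $x\in V_C$, $\varphi(x)=y$, Lemma \ref{phi-con}(a) gives $\varphi(f(x))=\varphi(x)=y$ and $f(x)\in V_{f(C)}$, so $f(C)\in\mathcal{C}(X)_y$. This proves the set equality, and since each $f$ restricts to a graph isomorphism $C\to f(C)$, the admissible components are pairwise isomorphic via automorphisms of $X$.

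For part (ii), I must show $\mathrm{O}(X,Y)\subseteq\mathrm{CE}(X,Y)$, i.e. $k_C(y)=k_{\hat{C}}(y)$ for all $C,\hat{C}\in\mathcal{C}(X)_y$. By part (i) write $\hat{C}=f(C)$ for some $f\in\mathfrak{G}$. The key observation is that $f$ commutes with $\varphi$ (Lemma \ref{phi-con}(a)), so $f$ maps the set $V_C\cap\varphi^{-1}(y)$ bijectively onto $V_{f(C)}\cap\varphi^{-1}(f(y))=V_{\hat{C}}\cap\varphi^{-1}(y)$; wait—more carefully, $f$ fixes $\varphi^{-1}(y)$ setwise because $\varphi\circ f=\varphi$ forces $\varphi(f(v))=\varphi(v)$, hence $v\in\varphi^{-1}(y)\iff f(v)\in\varphi^{-1}(y)$. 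Therefore $f$ restricts to a bijection $V_C\cap\varphi^{-1}(y)\to V_{\hat{C}}\cap\varphi^{-1}(y)$, whence $k_C(y)=k_{\hat{C}}(y)$. This is the crux of the argument and the only place where real care is needed; the rest is bookkeeping.

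For part (iii), assume in addition $\varphi\in\mathrm{Com}(X,Y)$. By Proposition \ref{propc}(ii) (the chain \eqref{sub2}), $\mathrm{O}(X,Y)\cap\mathrm{Com}(X,Y)\subseteq\mathrm{LSur}(X,Y)$, so $\varphi$ is locally surjective, and by part (ii) it is component equitable; thus Proposition \ref{union2} applies and gives $c(X)_y=k_X(y)/k_C(y)$ for any $C\in\mathcal{C}(X)_y$. It remains to identify this ratio with $|\varphi^{-1}(\varphi(V_C))|/|V_C|$. For the numerator, by Proposition \ref{hom-con}(iii) we have $\varphi^{-1}(\varphi(V_C))=\bigcup_{\hat{C}\in\mathcal{C}(X)_{\varphi(C)}}V_{\hat{C}}$, a disjoint union; by Lemma \ref{second}(i), $\mathcal{C}(X)_{\varphi(C)}=\mathcal{C}(X)_{C_Y(y)}=\mathcal{C}(X)_y$, so $|\varphi^{-1}(\varphi(V_C))|=\sum_{\hat{C}\in\mathcal{C}(X)_y}|V_{\hat{C}}|$, and by part (i) all these $|V_{\hat{C}}|$ equal $|V_C|$, so the numerator is $c(X)_y\cdot|V_C|$. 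Dividing by $|V_C|$ yields $c(X)_y$, as desired. The main obstacle throughout is nothing deep—it is purely keeping straight which equivalence (set equality of admissible components, setwise invariance of fibres under $\mathfrak{G}$, the locally-surjective rewriting $\mathcal{C}(X)_{C'}=\mathcal{C}(X)_y$) is being invoked at each step, and citing the right earlier result.
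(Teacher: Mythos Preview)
Your proof is correct and follows essentially the same route as the paper: both argue (i) via the two inclusions using Lemma~\ref{phi-con}(a) and (b), derive (ii) from the observation that each $f\in\mathfrak{G}$ restricts to a bijection $V_C\cap\varphi^{-1}(y)\to V_{f(C)}\cap\varphi^{-1}(y)$, and obtain (iii) by combining Proposition~\ref{hom-con}(iii) with Lemma~\ref{second}(i) and the fact from (i) that all admissible components have the same size. The only difference is that in (iii) you invoke Proposition~\ref{union2} to get $c(X)_y=k_X(y)/k_C(y)$ and then never actually use this equality---your subsequent argument establishes $|\varphi^{-1}(\varphi(V_C))|=c(X)_y\cdot|V_C|$ directly, exactly as the paper does, so that detour can be dropped.
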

\begin{proof} (i) Let $C\in \mathcal{C}(X)_{y}$ and $f\in \mathfrak{G}$.  Then there exists $x\in V_X$ such that $\varphi(x)=y$ and, as
 $f\in \mathrm{Aut}(X),$ we have $f(C)=f(C_X(x))=C_Y(f(x))$. By
condition (a) in Lemma \ref{phi-con}, we have that $\varphi\circ f=\varphi.$ Thus
$\varphi(f(x))=\varphi(x)=y$ which gives $f(C)\in \mathcal{C}(X)_{y}$. So $\{f(C)\ : f\in \mathfrak{G}\}\subseteq  \mathcal{C}(X)_{y}.$ Note also that $f(\varphi^{-1}(y)\cap V_C)= \varphi^{-1}(y)\cap V_{f(C)}$.  Since $f$ is a bijection, that implies
\begin{equation} \label{A} k_{f(C)}(y)=k_{C}(y).
\end{equation}
We next show $\mathcal{C}(\Gamma)_{y}\subseteq \{f(C): f\in \mathfrak{G}\}.$ Let $\hat{C}\in \mathcal{C}(X)_{y}$ and let $\hat{x}\in V_{\hat{C}}$ such that $\varphi(\hat{x})=y$.  Thus we have $\varphi(x)=\varphi(\hat{x})$ and, by condition (b) in Lemma \ref{phi-con}, there exists $f\in  \mathfrak{G}$ with $\hat{x}=f(x).$ It follows that $\hat{x}\in f(V_C)=V_{f(C)}.$ Hence $\hat{C}$ and $f(C)$ are components with a vertex in common, which implies that $\hat{C}=f(C)$.

(ii) Use (i) and \eqref{A}.

(iii) Let $C\in \mathcal{C}(X)_{y}$ and $x\in V_C$ with $\varphi(x)=y$.
By (i), all the components in $\mathcal{C}(X)_{y}$ have the same number of vertices, so that, $c(X)_{y} |V_C|$ counts the vertices of all the components in $\mathcal{C}(X)_{y}$, that is, the size of the set  $\bigcup_{\hat{C}\in  \mathcal{C}(X)_{y}} V_{\hat{C}}$.  By \eqref{sub2}, we have $\varphi\in \mathrm{O}(X,Y)\cap \mathrm{Com}(X,Y)\subseteq\mathrm{LSur}(X,Y)$. Thus we can apply Proposition \ref{hom-con} (ii) to $\varphi,$ obtaining
$C_{Y}(y)=\varphi(C).$ So, by Lemma \ref{second} (i), we get $\mathcal{C}(X)_{y}=\mathcal{C}(X)_{C_{Y}(y)}=\mathcal{C}(X)_{\varphi(C)}$. Hence, by Proposition \ref{hom-con} (iii),  \[\displaystyle{\bigcup_{\hat{C}\in  \mathcal{C}(X)_{y}} V_{\hat{C}}}=\displaystyle{\bigcup_{\hat{C}\in  \mathcal{C}(X)_{\varphi(C)}}V_{\hat{C}}}=\varphi^{-1}(\varphi(V_C)),\]
which gives $c(X)_{y} |V_C|=|\varphi^{-1}(\varphi(V_C))|.$
\end{proof}

\begin{proof}[Proof of Theorem B] By Propositions \ref{propc} and \ref{union2cons} we have
\begin{equation*}
 \mathrm{O}(X,Y)\cap\mathrm{Com}(X,Y)\subseteq  \mathrm{CE}(X,Y)\cap \mathrm{LSur}(X,Y)\cap \mathrm{Com}(X,Y).
 \end{equation*}
 Thus the assertion follows, combining Lemma \ref{second} and Proposition \ref{union2}.
\end{proof}

Note that Formula \eqref{best-for} is more manageable than Formula \eqref{rough-for} due to its high level of symmetry. Moreover the terms in the summand are easily computable in many contexts. A remarkable case is given when $X$ is the quotient proper power graph  and $Y$ is the proper power type graph of  a fusion controlled permutation group. That case will be examined in \cite{BIS1} and  \cite{BIS2}. We now write an explicit procedure for computing $c(X)$ based upon our results.

\begin{procedure} {\bf Procedure to compute  $c(X)$ for $\varphi\in  \mathrm{O}(X,Y)\cap\mathrm{Com}(X,Y)$ } \label{procedure}
\begin{itemize}\item[ (I)] {\it  Selection of $y_i$ and $C_i.$}
\end{itemize}
\begin{itemize}
\item[ {\it Start}]  : Pick arbitrary $y_1\in V_Y$ and choose any $C_1\in\mathcal{C}(X)_{y_1}$.

\item[ {\it Basic step}]: Given $y_1,\dots, y_i\in V_Y$ and $C_1,\dots,C_i\in\mathcal{C}(X)$ such that $C_j\in \mathcal{C}(X)_{y_j}\ (1\leq j\leq i)$, choose any $y_{i+1}\in V_Y\setminus \bigcup_{j=1}^iV_{C_{Y}(y_j)}=V_Y\setminus \bigcup_{j=1}^iV_{\varphi(C_j)}$ and  any $C_{i+1}\in \mathcal{C}(X)_{y_{i+1}}.$

\item[ {\it Stop}]: The procedure stops in $l=c(Y)$ steps.
\end{itemize}
\medskip

\begin{itemize}\item[ (II)] {\it  The value of  $c(X)$. }
\end{itemize}
 Compute the integers $\frac{k_{X}(y_j)}{k_{C_j}(y_j)}\  (1\leq j\leq c(Y))$ and sum them up to get  $c(X)$.
 \end{procedure}
\bigskip

Given a graph $X$, Procedure 6.10 may be applied to any graph $Y$ such that $ \mathrm{O}(X,Y)\cap\mathrm{Com}(X,Y)\neq \varnothing$ once $\varphi\in \mathrm{O}(X,Y)\cap\mathrm{Com}(X,Y)$ is chosen. Such $Y$, as explained in Section \ref{equi-orb}, are the quotients of $X$ with respect to the orbit partitions of the possible $\mathfrak{G}\leq \mathrm{Aut}(X)$, and $\varphi$ are the corresponding projection on $Y.$ Choices of $\mathfrak{G}$ with different sets of orbits
 lead to different computations of the coefficients $\frac{k_{X}(y_j)}{k_{C_j}(y_j)}$, with the computation easier when $\mathfrak{G}$ is ``large''.

\vskip 0.6 true cm
 \section{\bf The isomorphism class of the components }\label{iso-class}
\vskip 0.4 true cm
Under the assumption $\varphi\in \mathrm{PC}(X,Y)$, Proposition \ref{hom-con} guarantees that each component $C$ of $X$ admits as quotient the component $\varphi(C)$ of $Y$.
In this short section, we  study when $C$ is actually isomorphic to $\varphi(C)$.

\begin{lem}\label{iso} Let $\varphi\in \mathrm{PC}(X,Y)$. Given $C\in \mathcal{C}(X),$ we have $C\cong \varphi(C)$ if and only if $k_C(y)=1$ for all $y\in \varphi(C).$
 \end{lem}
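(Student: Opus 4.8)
The plan is to prove both implications by working directly with the restriction $\varphi_{\mid C}$, which by Proposition~\ref{hom-con} is a complete (indeed pseudo-covering) homomorphism from the connected graph $C$ onto the component $\varphi(C)$ of $Y$, and which induces the isomorphism $\varphi(C)\cong C/\hspace{-1mm}\sim_{\varphi}$ of Lemma~\ref{h}(ii). Thus $C\cong\varphi(C)$ holds precisely when the canonical projection $\pi:C\to C/\hspace{-1mm}\sim_{\varphi}$ is an isomorphism, and since $\pi$ is already complete and surjective, this happens if and only if $\pi$ is injective on $V_C$, i.e.\ every $\varphi$-cell meeting $V_C$ is a singleton. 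But the $\varphi$-cell of $x\in V_C$ intersected with $V_C$ has size exactly $k_C(\varphi(x))$; so injectivity of $\pi_{\mid C}$ is equivalent to $k_C(y)=1$ for every $y\in\varphi(C)$. This already gives the ``only if'' direction and reduces the ``if'' direction to showing that when all these multiplicities are $1$, the complete homomorphism $\varphi_{\mid C}$ is injective, hence an isomorphism.

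For the ``if'' direction in detail: assume $k_C(y)=1$ for all $y\in\varphi(C)=\varphi(V_C)$. Then for $x_1,x_2\in V_C$ with $\varphi(x_1)=\varphi(x_2)=:y$, both lie in $V_C\cap\varphi^{-1}(y)$, a set of size $k_C(y)=1$, forcing $x_1=x_2$. Hence $\varphi_{\mid C}:V_C\to V_{\varphi(C)}$ is injective, and being also surjective onto $V_{\varphi(C)}$, it is bijective. Since $\varphi_{\mid C}$ is complete (Proposition~\ref{hom-con}(ii)), a bijective complete homomorphism is an isomorphism by Definition~\ref{comp}(b), so $C\cong\varphi(C)$. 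For the ``only if'' direction: if $C\cong\varphi(C)$, then there is a complete bijective homomorphism $C\to\varphi(C)$; but more simply, $\varphi_{\mid C}$ itself composes as $\tilde\varphi_{\mid C}\circ\pi_{\mid C}$ with $\tilde\varphi_{\mid C}$ an isomorphism (Lemma~\ref{h}(ii)), so $|V_C|=|V_{\varphi(C)}|=|V_{C/\sim_\varphi}|$ forces $\pi_{\mid C}$ to be a bijection, whence every nonempty fibre $V_C\cap\varphi^{-1}(y)$ is a singleton, i.e.\ $k_C(y)=1$ for all $y\in\varphi(C)$.

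I do not anticipate a genuine obstacle here; the only mild subtlety is being careful that $k_C(y)$ refers to the intersection of $V_C$ with the fibre of $\varphi$ (not of $\varphi_{\mid C}$), and that ``$y\in\varphi(C)$'' is the same as ``$k_C(y)>0$'', so that the condition ``$k_C(y)=1$ for all $y\in\varphi(C)$'' is exactly the statement that $\varphi$ is injective on $V_C$. Everything else is a direct unwinding of Definition~\ref{comp}(b), Lemma~\ref{h}, and Proposition~\ref{hom-con}. One could also phrase the whole argument counting-wise: $|V_C|=\sum_{y\in\varphi(C)}k_C(y)\ge|\varphi(C)|$ with equality iff all summands equal $1$, and equality of vertex counts for the bijective-on-a-connected-graph map $\varphi_{\mid C}$ is equivalent to $C\cong\varphi(C)$ via completeness — but the fibre-injectivity formulation above is cleaner to write out.
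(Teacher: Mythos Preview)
Your proof is correct and follows essentially the same idea as the paper's: since $\varphi_{\mid C}:C\to\varphi(C)$ is a complete surjective homomorphism, it is an isomorphism if and only if it is injective on vertices, which is exactly the condition $k_C(y)=1$ for all $y\in\varphi(V_C)$; and $C\cong\varphi(C)$ forces $|V_C|=|V_{\varphi(C)}|$, whence the surjection $\varphi_{\mid C}$ is injective. Your detour through $C/\hspace{-1mm}\sim_{\varphi}$ and Lemma~\ref{h} is unnecessary---the paper works directly with $\varphi_{\mid C}$ in one line---but it does no harm, and you are somewhat more explicit than the paper about why the abstract isomorphism $C\cong\varphi(C)$ implies that the particular map $\varphi_{\mid C}$ is bijective.
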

\begin{proof} Since $\varphi_{\mid C}:C\rightarrow \varphi(C)$ is always a complete homomorphism, $\varphi_{\mid C}$ is an isomorphism if and only if it is injective, that is,
$k_C(y)=|V_C\cap \varphi^{-1}(y)|=1$ for all $y\in \varphi(C).$
\end{proof}

\begin{prop} \label{independence}  Let $\varphi\in \mathrm{O}(X,Y)\cap \mathrm{Com}(X,Y)$ and let $C\in \mathcal{C}(X)$.
\begin{itemize}
\item[(i)] If $y,\overline{y}\in V_{\varphi(C)}$, then $\frac{k_{X}(y)}{k_{C}(y)}=\frac{k_{X}(\overline{y})}{k_{C}(\overline{y})}.$
\item[(ii)] $C\cong \varphi(C)$ if and only if
there exists $y\in V_{\varphi(C)}$ such that $k_{C}(y)=1$ and, for every $\overline{y}\in V_{\varphi(C)}$, $k_{X}(y)=k_{X}(\overline{y}).$
\item[(iii)] If there exists $y\in V_{\varphi(C)}$ such that $k_{C}(y)=k_{X}(y),$ then for every $\overline{y}\in V_{\varphi(C)}$, $k_{C}(\overline{y})=k_{X}(\overline{y}).$
\item[(iv)] If there exists $y\in V_{\varphi(C)}$ such that $k_{C}(y)=k_{X}(y)>1,$ then $C\not\cong \varphi(C).$
\end{itemize}
\end{prop}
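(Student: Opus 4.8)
The plan is to exploit the $\mathfrak{G}$-consistency of $\varphi$ together with the structural facts about the admissible components already established in Proposition \ref{union2cons} and Lemma \ref{iso}. First I would observe that, by \eqref{sub2}, $\varphi\in\mathrm{O}(X,Y)\cap\mathrm{Com}(X,Y)\subseteq\mathrm{LSur}(X,Y)\cap\mathrm{CE}(X,Y)$, so Proposition \ref{hom-con} applies and $\varphi(C)=C_Y(\varphi(x))$ for any $x\in V_C$. For part (i), given $y,\overline{y}\in V_{\varphi(C)}$, I note that $C\in\mathcal{C}(X)_{y}\cap\mathcal{C}(X)_{\overline{y}}$, so by Proposition \ref{union2} applied at both $y$ and $\overline{y}$ we get $c(X)_{y}=k_X(y)/k_C(y)$ and $c(X)_{\overline{y}}=k_X(\overline{y})/k_C(\overline{y})$; since $y$ and $\overline{y}$ lie in the same component $\varphi(C)$ of $Y$, Lemma \ref{second}(ii) gives $c(X)_{y}=c(X)_{\overline{y}}$, and (i) follows.

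For part (ii), by Lemma \ref{iso} we have $C\cong\varphi(C)$ if and only if $k_C(y)=1$ for \emph{all} $y\in V_{\varphi(C)}$. The forward direction is then immediate: pick any $y\in V_{\varphi(C)}$, so $k_C(y)=1$, and by (i) we get $k_X(y)=k_X(\overline{y})k_C(y)/k_C(\overline{y})=k_X(\overline{y})$ since $k_C(\overline{y})=1$ too. For the converse, suppose some $y\in V_{\varphi(C)}$ has $k_C(y)=1$ and $k_X(y)=k_X(\overline{y})$ for all $\overline{y}\in V_{\varphi(C)}$; then for any $\overline{y}$, part (i) gives $k_C(\overline{y})=k_X(\overline{y})k_C(y)/k_X(y)=1$, so by Lemma \ref{iso} we conclude $C\cong\varphi(C)$. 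Part (iii) is a direct consequence of (i): if $k_C(y)=k_X(y)$ for some $y\in V_{\varphi(C)}$, then the common value of $k_X(\overline{y})/k_C(\overline{y})$ over $\overline{y}\in V_{\varphi(C)}$ equals $1$, whence $k_C(\overline{y})=k_X(\overline{y})$ for all such $\overline{y}$. Finally, part (iv) combines (iii) and Lemma \ref{iso}: if $k_C(y)=k_X(y)>1$ for some $y$, then $k_C(y)>1$, so the condition $k_C(y')=1$ for all $y'\in V_{\varphi(C)}$ fails, and therefore $C\not\cong\varphi(C)$.

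The only genuinely substantive ingredient is the constancy statement in (i), and its proof rests entirely on Lemma \ref{second}(ii) (equality of $c(X)_y$ across vertices of a fixed component of $Y$) and Proposition \ref{union2} (the divisibility formula $c(X)_y = k_X(y)/k_C(y)$); both of these require $\varphi$ to be locally surjective and component equitable, which we get for free from the inclusion \eqref{sub2}. So I do not anticipate a real obstacle here — the main point is simply to assemble the right earlier results and to keep careful track of which quantities are being compared. One small care point: in (ii) one must verify that $k_C(y)$ and $k_X(y)$ are genuinely positive integers for $y\in V_{\varphi(C)}$, which holds because $y$ is admissible for $C$ (as $y\in V_{\varphi(C)}=\varphi(V_C)$) and hence also for $X$.
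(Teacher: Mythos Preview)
Your proof is correct and follows essentially the same route as the paper: for (i) you combine Proposition \ref{union2} (using that $\varphi$ is locally surjective and component equitable via \eqref{sub2} and Proposition \ref{union2cons}(ii)) with Lemma \ref{second}(ii), and then derive (ii)--(iv) from (i) together with Lemma \ref{iso}. The paper compresses (ii)--(iv) into a single line (``immediate from (i) and Lemma \ref{iso}''), whereas you spell out the short computations, but the underlying argument is identical.
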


\begin{proof}(i)  Since $C\in \mathcal{C}(X)_{y}\cup  \mathcal{C}(X)_{\overline{y}},$ by Proposition \ref{union2}, we get $c(X)_{y}=\frac{k_{X}(y)}{k_{C}(y)}$ as well as $c(X)_{\overline{y}}=\frac{k_{X}(\overline{y})}{k_{C}(\overline{y})}.$ Now, by \eqref{sub2}, $\mathrm{O}(X,Y)\cap \mathrm{Com}(X,Y)\subseteq \mathrm{LSur}(X,Y)\cap \mathrm{Com}(X,Y)$. Thus Lemma \ref{second} (ii) applies giving $c(X)_{y}=c(X)_{\overline{y}}$ and the equality follows.

(ii)-(iv)  They are immediate from (i) and Lemma \ref{iso}.
\end{proof}

\bigskip

{\bf Acknowledgements} The author wishes to thank Gena Hahn, Cheryl Praeger, Ji\v{r}\'i Fiala, J\"urgen Lerner and Ulrich Knauer for helpful comments on a preliminary version of the paper.  A particular thank to the anonymous referee whose advices led to a significative improvement of the paper.
The author is partially supported by GNSAGA of INdAM.

\bigskip
\bigskip

{\footnotesize \pn{\bf Daniela~Bubboloni}\; \\  {Dipartimento di Matematica e Informatica U.Dini},\\
{ Viale Morgagni 67/a}, 50134 Firenze, Italy}\\
{\tt Email:  daniela.bubboloni@unifi.it}\\

\end{document}